\def\@cite#1#2{{\m@th\upshape\bfseries%
[{#1\if@tempswa{\m@th\upshape\mdseries, #2}\fi}]}}
\newtheorem{theorem}{Theorem}[section]
\newtheorem{lemma}[theorem]{Lemma}
\newtheorem{corollary}[theorem]{Corollary}
\newtheorem{proposition}[theorem]{Proposition}
\theoremstyle{definition}
\newtheorem{definition}[theorem]{Definition}
\newtheorem{remark}[theorem]{Remark}
\newtheorem{example}[theorem]{Example}
\numberwithin{equation}{section}
\newcommand{\bF}{{\mathbb{F}}}
\newcommand{\bN}{{\mathbb{N}}}
\newcommand{\bT}{{\mathbb{T}}}
\newcommand{\bZ}{{\mathbb{Z}}}
  \newcommand{\A}{{\mathcal{A}}}
  \newcommand{\B}{{\mathcal{B}}}
\renewcommand{\H}{{\mathcal{H}}}
  \newcommand{\K}{{\mathcal{K}}}
\renewcommand{\O}{{\mathcal{O}}}
  \newcommand{\U}{{\mathcal{U}}}
\renewcommand{\phi}{\varphi}
\newcommand{\upchi}{{\raise.35ex\hbox{\ensuremath{\chi}}}}
\newcommand{\fA}{{\mathfrak{A}}}
\newcommand{\fB}{{\mathfrak{B}}}
\newcommand{\fI}{{\mathfrak{I}}}
\newcommand{\fJ}{{\mathfrak{J}}}
\newcommand{\fM}{{\mathfrak{M}}}
\newcommand{\fs}{{\mathfrak{s}}}
\newcommand{\ft}{{\mathfrak{t}}}
\newcommand{\bi}{{\mathbf{i}}}
\newcommand{\bj}{{\mathbf{j}}}
\newcommand{\bn}{{\mathbf{n}}}
\newcommand{\bx}{{\mathbf{x}}}
\newcommand{\by}{{\mathbf{y}}}
\newcommand{\bz}{{\mathbf{z}}}
\newcommand{\rC}{{\mathrm{C}}}
\newcommand{\qand}{\quad\text{and}\quad}
\newcommand{\qif}{\quad\text{if}\quad}
\newcommand{\qfor}{\quad\text{for}\quad}
\newcommand{\qforal}{\quad\text{for all}\quad}
\newcommand{\AND}{\text{ and }}
\newcommand{\FOR}{\text{ for }}
\newcommand{\diag}{\operatorname{diag}}
\newcommand{\id}{{\operatorname{id}}}
\newcommand{\ran}{\operatorname{Ran}}
\newcommand{\spn}{\operatorname{span}}
\newcommand{\bsl}{\setminus}
\newcommand{\ca}{\mathrm{C}^*}
\newcommand{\cenv}{\mathrm{C}^*_{\text{e}}}
\newcommand{\Fn}{\mathbb{F}_n^+}
\newcommand{\ol}{\overline}
\newcommand{\ltwo}{\ell^2}
\newcommand{\two}{\boldsymbol{2}}
\newcommand{\one}{\boldsymbol{1}}
\newcommand{\zero}{\boldsymbol{0}}
\begin{document}
%%%%%%%%%%%%%%%%%%%%%%%%%%%%%%%%%%%%%%%%%%%

\title[C*-envelopes]{C*-envelopes of tensor algebras\\ for multivariable dynamics}

\author[K.R.Davidson]{Kenneth R. Davidson}
\address{Pure Math.\ Dept.\\U. Waterloo\\Waterloo,
ON\; N2L--3G1\\CANADA}
\email{krdavids@uwaterloo.ca}

\author{Jean Roydor}
\address{Pure Math.\ Dept.\\U. Waterloo\\Waterloo,
ON\; N2L--3G1\\CANADA}
\email{jroydor@uwaterloo.ca}

\subjclass[2000]{47L55, 47L40, 46L05, 37B20, 37B99}
\keywords{multivariable dynamical system, C*-envelope, groupoid C*-algebra, crossed product by an endomorphism, minimal, simple}

\thanks{First author partially supported by an NSERC grant.}

\begin{abstract} We give a new very concrete description of the C*-envelope
of the tensor algebra associated to multivariable dynamical system. 
In the surjective case, this C*-envelope is described
as a crossed product by an endomorphism, and as a groupoid C*-algebra. 
In the non-surjective case, it is a full corner of a such an algebra.
We also show that when the space is compact,
then the C*-envelope is simple if and only if the system is minimal.
\end{abstract}

\date{}
\maketitle

%%%%%%%%%%%%%%%%%%%%%%%%%%%%%%%%%%%%%%%%%%%
\section{Introduction}

A multivariable dynamical system $(X,\sigma)$ is a locally compact Hausdorff space $X$
together with a family $\sigma=(\sigma_1,\dots,\sigma_n)$ of proper continuous maps from 
$X$ into itself.
In \cite{DK}, two natural universal operator algebras associated to this system were introduced.
The more tractable one is the tensor algebra $\A(X,\sigma)$, which is the universal operator
algebra generated by $\rC_0(X)$ and $n$ isometries $\fs_1,\dots,\fs_n$ with pairwise
orthogonal ranges satisfying the covariance relations
\[ f \fs_i = \fs_i (f \circ \sigma_i) \qforal f \in \rC_0(X) \AND 1 \le i \le n . \]
In that paper, there is a description of the C*-envelope of $\A(X,\sigma)$ as the 
Cuntz--Pimsner algebra of an associated C*-correspondence.
It also contains an explicit description of a norming family of boundary representations.
So in principle, a more explicit description of this C*-envelope should be available.
When $n=1$, Peters \cite{Pet} showed that the C*-envelope is a crossed product of
a related dynamical system constructed from the original by a projective limit construction.
In this paper, we show that a similar description is possible for $n\ge2$.
The C*-envelope is no longer a crossed product by an automorphism, but it is
a crossed product by an endomorphism.
It also is a groupoid C*-algebra of an related dynamical system.

The C*-envelope $\cenv(\A)$ of an operator algebra $\A$ is the unique minimal
C*-algebra (up to isomorphism fixing the image of $\A$) generated $j_0(\A)$,
where $j_0:\A\to\B(\H)$ is a completely isometric isomorphism.
This is characterized by the fact that if $j:\A\to\B(\H)$ is another completely isometric
isomorphism, then there is a surjective $*$-homomorphism 
$q:\ca(j(\A)) \twoheadrightarrow \cenv(\A)$ such that $qj=j_0$.
The existence of the C*-envelope was conjectured by Arveson \cite{Arv1}, and established
in many cases.  It was eventually proven by Hamana \cite{Ham}.
More recently, Dritschel and McCullough \cite{DM} provided a new proof.
The main ingredient was the notion of a maximal dilation.
A dilation $\pi$ of a representation $\rho$ is maximal if any further dilation of $\pi$
can only be accomplished by adding on a direct summand.
They prove that every maximal dilation factors through the C*-envelope.
In particular, if one starts with a completely isometric representation $\rho$
and constructs a maximal dilation $\pi$, then $\ca(\pi(\A))$ is the C*-envelope.

Arveson's definition of a boundary representation is equivalent to being maximal (in the
sense that it is a maximal dilation of itself) and extends to an irreducible $*$-representation
of the enveloping C*-algebra.
Dritschel and McCullough do not produce irreducible representations, but this is
not necessary to construct the C*-envelope.
However these irreducible representations are the analogue of the Choquet boundary, and
Arveson \cite{Arv_choq} shows that in the separable case, one can use a direct integral
decomposition to show that there are sufficiently many boundary representations to
construct the C*-envelope.
While a sufficient family of boundary representations for $\A(X,\sigma)$ were 
constructed in \cite{DK},
we find it convenient here to drop the irreducibility condition in order to have a larger family
of representations to work with to construct the C*-envelope more explicitly.

In the last section, we provide a direct proof that in the compact case,
simplicity of the C*-envelope is equivalent to minimality of the dynamical system.
Our proof is based on the representation of the C*-envelope as a crossed product
$\fB \times_\alpha \bN$ by an endomorphism.  The C*-algebra $\fB$ is an inductive
limit of homogeneous C*-algebras.  We provide an explicit description of the
$\alpha$-invariant and $\alpha$-bi-invariant ideals of $\fB$.
Then applying a result of Paschke \cite{Pa} will yield simplicity.
More general results of Schweizer \cite{Sch} for Cuntz--Pimsner algebras of
C*-correspondences could be used instead.

%%%%%%%%%%%%%%%%%%%%%%%%%%%%%%%%%%%%%%%%%%%
\section{Preliminaries} \label{S:prelim}

If $(X,\sigma)$ is a multivariable dynamical system, we form the (non-closed) covariance
algebra $\A_0(X,\sigma)$ as the space of polynomials in $n$ indeterminates
$\fs_1,\dots,\fs_n$ with coefficients in $\rC_0(X)$ where multiplication is
determined by the covariance relations $ f \fs_i = \fs_i (f \circ \sigma_i)$.
Let $\Fn$ be the free semigroup of all words in the alphabet $\{1,\dots,n\}$.
If $w = i_1i_1\dots i_k$, we write $\sigma_w$ for the map
$\sigma_{i_1} \circ\sigma_{i_2}\circ \dots \circ \sigma_{i_k}$;
and we write $\fs_w = \fs_{i_1} \dots \fs_{i_k}$.
Then a typical element of $\A_0(X,\sigma)$ is a finite sum $\sum_{w\in\Fn} \fs_w f_w$
where $f_w$ are arbitrary elements of $\rC_0(X)$.
The multiplication rule is just $(\fs_v f)(\fs_w g) = \fs_{vw} (f\circ\sigma_w)g$.

A row contractive representation $\rho$ of $\A_0(X,\sigma)$ is a homomorphism into $\B(\H)$
such that the restriction to $\rC_0(X)$ is a $*$-homomorphism
and $\big\| \big[ \rho(\fs_1) \ \dots \ \rho(\fs_n) \big] \big\| \le 1$.
The tensor algebra $\A(X,\sigma)$ is the universal operator algebra with these as
its completely contractive representations.
One can define the norm by taking a supremum over all such representations
into a fixed infinite dimensional Hilbert space sufficiently large to admit a
faithful representation of $\rC_0(X)$.
It is shown in \cite{DK} that every row contractive representation dilates to a row isometric
representation.
So it follows that $\fs_i$ are isometries with pairwise orthogonal ranges.

If $\rho$ is a (completely contractive) representation of an operator algebra $\A$
on a Hilbert space $\H$, we say that a representation $\pi$ of $\A$ on a Hilbert space
$\K$ containing $\H$ is a dilation of $\rho$ if $\K$ decomposes as
$\K = \H_- \oplus \H \oplus \H_+$ so that $\pi(A)$ is upper triangular with respect to this decomposition, and $\rho(A) = P_\H \pi(A) |_\H$ for all $A \in \A$.
As mentioned in the introduction, if the only dilations of $\rho$ have the form
$\pi = \rho \oplus \pi'$, then we say that $\rho$ is \textit{maximal}.

An easy way to obtain a representation of $\A(X,\sigma)$ is to pick a point $x \in X$
and define the \textit{orbit representation} $\lambda_x$.
This is defined on the Fock space $\ltwo(\Fn)$,  which has orthonormal basis
$\{ \xi_w : w \in \Fn\}$.
$\Fn$ acts on this space by the left regular action $L_v \xi_w = \xi_{vw}$.
Define
\[ \lambda_x(f) = \diag(f(\sigma_w(x))) \qand \lambda_x(\fs_i) = L_i \FOR 1 \le i \le n .\]
In general this is not maximal.
Indeed, this is maximal if and only if $x$ is not in the range of any map $\sigma_i$.
The representation $\Lambda_X = \bigoplus_{x\in X} \lambda_x$ is called the
\textit{full Fock representation}.  In \cite{DK}, it is shown that the full Fock representation
is completely isometric.

To obtain maximal representations, it is generally necessary to use an inductive limit construction.
An \textit{infinite tail representation} is given by an infinite sequence
$\bi = i_0 i_1 i_2 \dots$ in the alphabet $\{1,\dots,n\}$ and a
corresponding sequence of points $\bx = \{ x_s \in X: s \ge 0 \}$
such that $\sigma_{i_s}(x_{s+1}) = x_s$.
We will call such a pair $(\bi,\bx)$ an \textit{infinite tail} for $(X,\sigma)$.
For each $s\ge0$, let $\H_s$ denote a copy of Fock space
with basis $\{ \xi^s_w : w \in \Fn \}$.
Identify $\H_s$ with a subspace of $\H_{s+1}$ via $R_{i_s}$,
where $R_{i_s} \xi^s_w = \xi^{s+1}_{wi_s}$.
Consider the orbit representations $\lambda_{x_s}$ to be a representation on $\H_s$.
It is easy to see that $\lambda_{x_{s+1}} |_{\H_s} = \lambda_{x_s}$ for $s \ge 0$.
So we may define $\lambda_{\bi,\bx}$ to be the inductive limit of the
representations $\lambda_{x_s}$ on $\H = \ol{\bigcup_{s\ge0} \H_s}$.
This representation is always maximal.

In \cite{DK}, one required that the maximal representations also be irreducible.
This was accomplished by insisting that the orbits consist of distinct points.
For our purposes, it is convenient to ignore that requirement.
One still obtains a family of maximal representations.
Thus we have found two types of maximal representations.
 From these, we form two large representations of $\A(X,\sigma)$:
\begin{alignat*}{2}
 \lambda_{X,1} &:= \bigoplus{}_{x \in U} \ \lambda_x
 &\quad&\text{where } U = X \bsl \cup_{i=1}^{n} \sigma_i (X)\\
 \lambda_{X,2} &:= \bigoplus \lambda_{\bi,\bx} &\quad&\text{summing 
 over all possible infinite tails}\\
 \lambda_X &:= \lambda_{X,1} \oplus \lambda_{X,2} .
\end{alignat*}

We can state a result which follows from  \cite[Corollary~2.8]{DK}.
This is the case because $\Lambda_X$ is completely isometric, and
 $\lambda_X$ is a maximal dilation of $\Lambda_X$.

%%%%%%%%%%%%%%%%%%%%%%%%%%%
\begin{lemma}\label{L:maximal}
Let $(X,\sigma)$ be a multivariable dynamical system.
Then $\lambda_X$ is a completely isometric maximal representation of $\A(X,\sigma)$.
Consequently,
\[ \cenv(\A(X,\sigma)) = \ca(\lambda_X(\A(X,\sigma))). \]
\end{lemma}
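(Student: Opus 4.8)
The plan is to reduce the statement to the dilation-theoretic characterization of the C*-envelope recalled in the introduction, namely that $\ca(\pi(\A))$ is the C*-envelope whenever $\pi$ is a maximal dilation of a completely isometric representation. So there are really two things to verify: that $\lambda_X$ is completely isometric, and that it is maximal. For the first, I would invoke \cite[Corollary~2.8]{DK} together with the stated fact that the full Fock representation $\Lambda_X = \bigoplus_{x \in X} \lambda_x$ is completely isometric. The key observation is that $\lambda_X$ is a dilation of $\Lambda_X$: each orbit representation $\lambda_x$ sits inside a maximal representation of the family defining $\lambda_{X,1} \oplus \lambda_{X,2}$. Indeed, if $x \in U = X \bsl \cup_i \sigma_i(X)$, then $\lambda_x$ is itself one of the summands of $\lambda_{X,1}$ and is already maximal; if $x \notin U$, then $x = \sigma_{i_0}(x_1)$ for some $i_0$ and some $x_1 \in X$, and continuing we can build an infinite tail $(\bi,\bx)$ with $x_0 = x$, so that $\lambda_x = \lambda_{x_0}$ is the compression of the inductive-limit representation $\lambda_{\bi,\bx}$ to the subspace $\H_0 \subseteq \H$. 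Since the inclusions $\H_s \hookrightarrow \H_{s+1}$ are implemented by the isometries $R_{i_s}$ and satisfy $\lambda_{x_{s+1}}|_{\H_s} = \lambda_{x_s}$, the compression of $\lambda_{\bi,\bx}$ to $\H_0$ really is $\lambda_{x}$, and moreover $\lambda_{\bi,\bx}$ is upper triangular with respect to $\H_0 \oplus \H_0^\perp$, so this is a genuine dilation in the sense defined in Section~\ref{S:prelim}. Hence $\lambda_X$ dilates $\Lambda_X$ and is therefore completely isometric.

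For maximality, I would argue summand by summand. Each $\lambda_x$ with $x \in U$ is maximal: this is the assertion in the preliminaries that $\lambda_x$ is maximal precisely when $x$ is not in the range of any $\sigma_i$, which can be checked directly by showing that the upper-triangular structure forced on a dilation by the covariance relations and the row-isometric condition must split off $\H_0$ as a reducing subspace when there is no incoming edge at $x$. Each $\lambda_{\bi,\bx}$ is maximal: this is stated outright in Section~\ref{S:prelim} (``This representation is always maximal''), the point being that the inductive limit has absorbed all the room a dilation could exploit — any dilation of $\lambda_{\bi,\bx}$ restricts to a dilation of each $\lambda_{x_s}$, which is eventually trivial after passing far enough up the tail. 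Finally, a direct sum of maximal representations is maximal, since a dilation of $\bigoplus_\alpha \pi_\alpha$ restricts, after compressing, to a dilation of each $\pi_\alpha$, and if each of those is trivial then so is the original; more carefully, one uses that the summands are orthogonal and that an upper-triangular dilation of the sum must be block upper-triangular in a way compatible with each block. Thus $\lambda_X = \lambda_{X,1} \oplus \lambda_{X,2}$ is maximal.

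Putting the two halves together: $\lambda_X$ is a completely isometric representation of $\A(X,\sigma)$ which is its own maximal dilation, so by the Dritschel--McCullough theorem (as recalled in the introduction) the C*-algebra it generates is the C*-envelope, i.e.\ $\cenv(\A(X,\sigma)) = \ca(\lambda_X(\A(X,\sigma)))$.

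The step I expect to be the main obstacle — or at least the one needing the most care — is the verification that the direct sum $\lambda_{X,1} \oplus \lambda_{X,2}$ is maximal, because ``maximal'' is a dilation-closure property rather than an intrinsic one, and one has to rule out dilations that mix the summands. The cleanest route is probably to appeal to the general principle that a representation is maximal if and only if it has no nontrivial dilation, combined with the fact (which is really the content of \cite[Corollary~2.8]{DK} applied to $\Lambda_X$) that $\lambda_X$ arises as \emph{the} maximal dilation of a completely isometric representation, so that its maximality is inherited from that construction rather than re-proved from scratch; this is exactly what the sentence preceding the lemma in the excerpt is pointing at, and I would lean on it rather than reprove maximality of the sum by hand.
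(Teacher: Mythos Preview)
Your proposal is correct and follows the paper's approach exactly: the paper does not give a separate proof of this lemma but simply records, in the sentence preceding it, that $\Lambda_X$ is completely isometric and $\lambda_X$ is a maximal dilation of $\Lambda_X$, so the result follows from \cite[Corollary~2.8]{DK}. Your write-up is a faithful unpacking of that one-line justification, and your closing paragraph correctly identifies that leaning on \cite{DK} for the maximality of the dilation (rather than re-verifying that direct sums of maximal representations are maximal) is the intended route.
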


%%%%%%%%%%%%%%%%%%%%%%%%%%%%%%%%%%%%%%%%%%%
\section{The surjective case}

In this section, we suppose that $(X,\sigma)$ is surjective in the sense that
$X = \bigcup_{i=1}^{n} \sigma_i (X)$.
When $n=1$, Peters \cite{Pet} used a projective limit construction on $(X,\sigma)$ to obtain
a new space $\tilde X$ and a homeomorphism $\tilde\sigma$, together with a projection
$p:\tilde X \to X$ such that $p \tilde\sigma = \sigma p$.
We first define an analogue of this construction.

Let $\bn = \{1,\dots,n\}$ with the discrete topology.
Set $Y=\bn^\bN \times X^\bN$ with the product topology.
Let $\tilde X$ be the subset of $Y$ consisting of all infinite tails for $(X,\sigma)$,
namely
\[
 \tilde{X}=  \{ (\bi,\bx) \in Y  : \sigma_{i_k}(x_{k+1}) = x_k \FOR k \ge 0 \} .
\]
The continuity of the maps ensures that $\tilde X$ is closed in $Y$.
If $\bi=(i_0,i_1,i_2,\dots)$, let $i\bi = (i,i_0,i_1,\dots)$.
Likewise, if $\bx=(x_0,x_1,x_2,\dots)$, let $(x,\bx) = (x,x_0,x_1,\dots)$
For $i=1,\dots,n$, we define $\tilde{\sigma}_i:\tilde{X} \to \tilde{X}$ by
\[
 \tilde{\sigma}_i  (\bi,\bx) = \big( i\bi, (\sigma_i(x_0),\bx) \big) =
 \big( (i,i_0,i_1,\dots),(\sigma_i(x_0),x_0,x_1,\dots) \big) .
\]
It is easy to see that $\tilde{\sigma}_i$ is a homeomorphism of $\tilde{X}$ onto $\tilde{X}_i$,
where
\[ \tilde{X}_i=  \{ (\bi,\bx) \in \tilde{X} : i_0=i  \}. \]
Observe that $\tilde{X}$ is the disjoint union of the sets $\tilde{X}_i$ for $1 \le i \le n$.
Thus we have constructed a new multivariable dynamical system $(\tilde{X},\tilde{\sigma})$
which we call the \textit{covering system} of $(X,\sigma)$.

Also define a projection $p:\tilde X \to X$ by $p (\bi,\bx) = x_0$.
Given any $x_0\in X$, surjectivity ensures that there is at least one choice of $i_0$
and $x_1\in X$ so that $\sigma_{i_0}(x_1) = x_0$.  Recursively, one can construct
a point $(\bi,\bx)\in\tilde X$ so that $p((\bi,\bx)) = x_0$.  So $p$ is surjective.
It is easy to see that $\sigma_i p = p \tilde\sigma_i$ for $1 \le i \le n$.

%%%%%%%%%%%%%%%%%%%%%%%%%%%
\begin{lemma} \label{L:p proper}
$\tilde X$ is (locally) compact when $X$ is (locally) compact.

The projection $p:\tilde X \to X$ is a proper map.

If $(\bi,\bx) \in \tilde X$, then there is a unique infinite tail in $\tilde X$ beginning at
this point.  Thus $\tilde{\tilde X} = \tilde X$.
\end{lemma}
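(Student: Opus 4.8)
The three assertions of Lemma~\ref{L:p proper} are really three facets of one structural fact: $\tilde X$ is a \emph{projective limit} of copies of (subsets of) $X$, with bonding maps built from $\sigma$. I would organize the proof around this observation.

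For the first assertion, note that $\tilde X$ is a closed subset of $Y = \bn^\bN \times X^\bN$, as already observed in the text. If $X$ is compact, then $Y$ is compact by Tychonoff, and hence so is the closed subset $\tilde X$. If $X$ is merely locally compact, I would argue that $\tilde X$ is locally compact because it is (homeomorphic to) a closed subset of the locally compact space $\bn^\bN \times X^\bN$; here one uses that $\bn$ is compact so that the first factor contributes nothing to non-local-compactness, and that a countable product of locally compact spaces where all but finitely many are compact is again locally compact. Alternatively — and this is cleaner — I would deduce local compactness of $\tilde X$ from properness of $p$ together with local compactness of $X$, since the preimage of a compact neighborhood basis under a proper map into a locally compact Hausdorff space gives a compact neighborhood basis upstairs; this folds the first assertion into the second.

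For the second assertion, that $p:\tilde X\to X$ is proper, I would show directly that $p^{-1}(K)$ is compact for every compact $K\subseteq X$. Write $p^{-1}(K) = \{(\bi,\bx)\in\tilde X : x_0\in K\}$. The key is to control all the later coordinates $x_s$: since $\sigma_{i_{s-1}}(x_s)=x_{s-1}$, we have $x_s\in\sigma_{i_{s-1}}^{-1}(\{x_{s-1}\})$, and properness of each $\sigma_i$ means $\sigma_i^{-1}(\text{compact})$ is compact. Setting $K_0=K$ and $K_{s} = \bigcup_{i=1}^n \sigma_i^{-1}(K_{s-1})$, each $K_s$ is compact (a finite union of compacts), and $p^{-1}(K)$ is a closed subset of $\bn^\bN \times \prod_{s\ge0} K_s$, which is compact by Tychonoff. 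Hence $p^{-1}(K)$ is compact, so $p$ is proper.

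For the third assertion, I must produce a bijection between infinite tails in $\tilde X$ starting at a given point $(\bi,\bx)\in\tilde X$ and points of $\tilde X$ — equivalently, show each such point has exactly one tail, so that $\tilde{\tilde X}\cong\tilde X$ canonically. An infinite tail in $(\tilde X,\tilde\sigma)$ starting at $(\bi,\bx)$ is a sequence $(\bj,\by)^{(s)}\in\tilde X$ with $(\bj,\by)^{(0)}=(\bi,\bx)$ and $\tilde\sigma_{j_s}\big((\bj,\by)^{(s+1)}\big) = (\bj,\by)^{(s)}$. The point is that $\tilde\sigma_i$ has image exactly $\tilde X_i$ and is a homeomorphism onto it, so from $(\bj,\by)^{(s)}$ the index $j_s$ is \emph{forced} to be its zeroth $\bn$-coordinate, and then $(\bj,\by)^{(s+1)} = \tilde\sigma_{j_s}^{-1}\big((\bj,\by)^{(s)}\big)$ is uniquely determined. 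Unwinding the recursion, $(\bj,\by)^{(s)}$ is exactly the "shift by $s$" of $(\bi,\bx)$: its $\bn$-sequence is $(i_s,i_{s+1},\dots)$ and its $X$-sequence is $(x_s,x_{s+1},\dots)$. So there is exactly one tail, and the induced identification $\tilde{\tilde X}=\tilde X$ sends a tail to its starting point, which is the required homeomorphism (continuity in both directions being a routine check on the product topology).

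The main obstacle is the second assertion: keeping the "compact exhaustion upward" bookkeeping honest, i.e. verifying that $p^{-1}(K)$ genuinely sits inside $\bn^\bN\times\prod_s K_s$ as a \emph{closed} subset — closedness follows from $\tilde X$ being closed in $Y$ intersected with the closed slab $\{x_0\in K, x_s\in K_s\}$ — and confirming that properness of the $\sigma_i$ is exactly what makes each $K_s$ compact. Everything else (Tychonoff, the forced-index argument for uniqueness of tails) is soft.
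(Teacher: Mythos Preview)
Your arguments for properness of $p$ and for uniqueness of tails match the paper's proof essentially verbatim: the paper builds the same sets $K_k = \bigcup_{|w|=k}\sigma_w^{-1}(K)$ and encloses $p^{-1}(K)$ in $\bn^\bN\times\prod_k K_k$, and for the third assertion it invokes the disjointness of the ranges $\tilde X_i$ to force both the index sequence and the successive preimages, exactly as you do.

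For local compactness, your approach (a) does not work: when $X$ is non-compact, $Y=\bn^\bN\times X^\bN$ is \emph{not} locally compact, since $X^\bN$ has infinitely many non-compact factors --- your parenthetical theorem about products with all but finitely many factors compact is correct but inapplicable here. (The paper in fact flags exactly this: ``if $X$ is not compact, then $Y$ will not be locally compact, and we need to be more careful.'') Your approach (b), deducing local compactness of $\tilde X$ from properness of $p$ and local compactness of $X$, is correct and is a genuinely different route from the paper. The paper instead passes to the one-point compactification $X_\infty$, extends each $\sigma_i$ (via properness) to a map on $X_\infty$ fixing $\infty$, forms the compact space $\widetilde{X_\infty}$, and realizes $\tilde X$ as the complement of the compact set $\bn^\bN\times\{\boldsymbol\infty\}$ inside it. Your route (b) is cleaner in that it avoids the compactification machinery and recycles the properness argument you already need; the paper's route has the minor advantage of exhibiting an explicit ambient compactification of $\tilde X$.
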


\begin{proof}
If $X$ is compact, then $\tilde X$ is a closed subset of the compact space
$Y=\bn^\bN \times X^\bN$, and thus it is compact by Tychonoff's theorem.
However, if $X$ is not compact, then $Y$ will not be locally compact, and we need
to be more careful.
Let $X_\infty$ denote the one point compactification of $X$ obtained by adding
a point $\infty$.  Since each $\sigma_i$ is proper, it extends by continuity
to a map $\ol{\sigma}_i$ on $X_\infty$ by setting $\ol{\sigma}_i(\infty) = \infty$.
Form the compact space $\tilde X_\infty$.
Then the only points $(\bi,\bx)$ in $\tilde X_\infty$ for which any coordinate $x_j=\infty$
are the points $(\bi,\boldsymbol{\infty})$ where $\boldsymbol{\infty} = (\infty,\infty,\dots)$.
This is a compact set, and 
$\tilde X = \tilde X_\infty \setminus \bn^\bN \times \{\boldsymbol{\infty}\}$.
Since compact Hausdorff spaces are normal, it follows that $\tilde X$ is locally compact.

Let $K$ be a compact subset of $X$.
Since each $\sigma_i$ is proper, the sets $K_k = \bigcup_{|w|=k|} \sigma_w^{-1}(K)$
are compact for $k \ge 0$.
Therefore
\[p^{-1}(K) \subset \bn^\bN \times \prod_{k\ge0} K_k ,\]
which is compact.
Therefore $p$ is proper.

If $(\bi,\bx) \in \tilde X$, then the choice of the infinite tail beginning with this point
is uniquely determined.  This is because the maps $\tilde\sigma_i$ have disjoint ranges.
Indeed, the sequence of maps is just given by $\bi$ itself, and the points are
\[ \tilde x_k = \big( (i_k,i_{k+1}, \dots), (x_k, x_{k+1},\dots)) .\]
Therefore the covering space of $\tilde X$ is canonically
homeomorphic to $\tilde X$ itself via the projection map $\tilde p$.
\end{proof}

The new system has a number of advantages over the original.
In particular, it is possible to define an inverse map $\tau$ by
\[ \tau|_{\tilde X_i} = \tilde\sigma_i^{-1} \qfor 1 \le i \le n .\]
Clearly $\tau$ is everywhere defined on $\tilde X$ and is a local homeomorphism.
For $w \in \Fn$ with $|w|\ge1$, let $\tilde X_w = \tilde\sigma_w(\tilde X)$.
Observe that $\tilde X$ is the disjoint union of the clopen sets $\{\tilde X_w : |w|=k\}$ for
each $k\ge1$.
Let $\upchi_w$ denote the characteristic function of $\tilde X_w$.
This does not lie in $\rC_0(\tilde X)$ is $\tilde X$ (and hence $X$) is not compact.
But it does lie in the multiplier algebra.
Also let $p_k : \tilde X \to X$ be given by $p_k(\bi,\bx) = x_k$.
Observe that $p_k = p \circ \tau^k$.

%%%%%%%%%%%%%%%%%%%%%%%%%%%
\begin{lemma} \label{L:calculation}
Let the generators for $\A(\tilde X, \tilde\sigma)$ be $\ft_1,\dots,\ft_n$.
For any $f\in\rC_0(\tilde X)$ and $w\in\Fn$, $\ft_w f \ft_w^* = \upchi_w(f\circ\tau^{|w|})$.
\end{lemma}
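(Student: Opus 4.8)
The plan is to reduce the statement to the case $|w|=1$ and then induct on $|w|$; the one substantive ingredient is the identity $\ft_i\ft_i^* = \upchi_i$ (for $1 \le i \le n$) in $\cenv(\A(\tilde X,\tilde\sigma))$, which I would establish first. One inclusion is formal: since $\tilde X$ is the disjoint union of the clopen sets $\tilde X_j = \tilde\sigma_j(\tilde X)$, the multiplier $\upchi_i \circ \tilde\sigma_j$ equals $1$ when $j=i$ and $0$ when $j \ne i$, so feeding it into the covariance relation $g\ft_j = \ft_j(g\circ\tilde\sigma_j)$ (extended to the multiplier algebra of $\rC_0(\tilde X)$) yields $\upchi_i \ft_i = \ft_i$, whence $\ft_i\ft_i^* \le \upchi_i$.

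For the reverse inclusion I would use that the covering system is surjective. By \cite{DK}, $\cenv(\A(\tilde X,\tilde\sigma))$ is the Cuntz--Pimsner algebra of the associated C*-correspondence, and surjectivity makes the left action of $\rC_0(\tilde X)$ on it injective and compact-operator-valued, so Katsura's ideal is all of $\rC_0(\tilde X)$; the resulting covariance relation reads $g = \sum_{i=1}^n \ft_i(g\circ\tilde\sigma_i)\ft_i^* = g\,(\sum_i \ft_i\ft_i^*)$ for every $g\in\rC_0(\tilde X)$. (The same relation is visible directly from Lemma~\ref{L:maximal}: surjectivity empties $\tilde X\setminus\bigcup_i\tilde\sigma_i(\tilde X)$, so $\lambda_{\tilde X}$ is a sum of inductive-limit summands $\lambda_{\bi,\bx}$, and in each of these the one-dimensional defect of $\sum_i L_iL_i^*$ at level $s$, spanned by $\xi^s_{\mt}$, is identified with $L_{i_s}\xi^{s+1}_{\mt}$, hence lies in the range, at level $s+1$.) Applying the relation with $g = \upchi_i h$, $h$ ranging over an approximate identity of $\rC_0(\tilde X)$, and using $\ft_i\ft_i^* \le \upchi_i$ together with $\tilde X_i\cap\tilde X_j = \mt$ for $i\ne j$, one obtains $\ft_i\ft_i^* = \upchi_i$. (This step genuinely needs the C*-envelope: the identity of the lemma already fails in the completely isometric, but non-maximal, full Fock representation.)

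Next, the case $|w|=1$. Given $f\in\rC_0(\tilde X)$, the function $g$ equal to $f\circ\tilde\sigma_i^{-1}$ on the clopen set $\tilde X_i$ and to $0$ off it lies in $\rC_0(\tilde X)$ and satisfies $g\circ\tilde\sigma_i = f$, so the covariance relation together with $\ft_i\ft_i^* = \upchi_i$ gives
\begin{equation*}
 \ft_i\,f\,\ft_i^* = \ft_i(g\circ\tilde\sigma_i)\ft_i^* = g\,\ft_i\ft_i^* = g\,\upchi_i = g = \upchi_i(f\circ\tau),
\end{equation*}
since $\tau$ restricts to $\tilde\sigma_i^{-1}$ on $\tilde X_i$. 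The general case then follows by induction on $|w|$: the case $w=\mt$ is trivial, and if $w=iv$ with $|v|=|w|-1$ then $\ft_w = \ft_i\ft_v$, so by the inductive hypothesis $h := \ft_v f\ft_v^* = \upchi_v(f\circ\tau^{|v|})\in\rC_0(\tilde X)$, and the case $|w|=1$ applied to $h$ gives $\ft_w f\ft_w^* = \upchi_i(h\circ\tau) = \upchi_i\,(\upchi_v\circ\tau)\,(f\circ\tau^{|v|+1})$. Since $\tau^{-1}(\tilde X_v) = \bigcup_j \tilde\sigma_j(\tilde X_v) = \bigcup_j \tilde X_{jv}$ is a disjoint union with $\tilde X_{jv}\subseteq\tilde X_j$, one has $\upchi_i\,(\upchi_v\circ\tau) = \upchi_i\sum_j\upchi_{jv} = \upchi_{iv} = \upchi_w$, so $\ft_w f\ft_w^* = \upchi_w(f\circ\tau^{|w|})$ and the induction closes.

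The main obstacle is the reverse inclusion in $\ft_i\ft_i^* = \upchi_i$: the inequality $\ft_i\ft_i^* \le \upchi_i$ is immediate from the covariance relation, but equality rests on the extra structure of the C*-envelope of the surjective covering system --- concretely, that $g = g\sum_i\ft_i\ft_i^*$ for every $g\in\rC_0(\tilde X)$. Everything past that point is manipulation of the covariance relation together with bookkeeping of characteristic functions of the clopen sets $\tilde X_w$.
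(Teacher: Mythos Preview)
Your argument is correct. The paper's proof follows the same overall plan but is organized more directly: instead of establishing the $|w|=1$ case and then inducting, it handles arbitrary $w$ in one stroke by applying the covariance relation to the function $\upchi_w(f\circ\tau^{|w|})$, using that $\tau^{|w|}\circ\tilde\sigma_w=\id$ and $\upchi_w\circ\tilde\sigma_w=1$. This yields $\upchi_w(f\circ\tau^{|w|})\,\ft_w=\ft_w f$, hence $\ft_w f\ft_w^*=\upchi_w(f\circ\tau^{|w|})\,\ft_w\ft_w^*$; then the inequality $\ft_w\ft_w^*\le\upchi_w$ together with $\sum_{|w|=k}\ft_w\ft_w^*=1=\sum_{|w|=k}\upchi_w$ forces $\ft_w\ft_w^*=\upchi_w$, finishing in one line. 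So the paper avoids both the explicit construction of the preimage function $g$ and the inductive bookkeeping with $\upchi_i(\upchi_v\circ\tau)=\upchi_{iv}$. On the other hand, you are more explicit about the one genuinely nontrivial point, namely why $\sum_i\ft_i\ft_i^*=1$ holds in $\cenv(\A(\tilde X,\tilde\sigma))$: the paper simply asserts this Cuntz relation (and justifies it later, in the proof of Theorem~\ref{T:projective}, by pointing to the infinite tail representations), whereas you spell out both the Cuntz--Pimsner and the concrete inductive-limit explanations. Either route is fine; the paper's is shorter, yours is more self-contained on the key step.
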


\begin{proof}
Let $k = |w|$.
\begin{align*}
  \upchi_w(f\circ\tau^{|w|}) \ft_w
  &= \ft_w (\upchi_w \circ \tilde\sigma_w)(f \circ \tau^k \circ \tilde\sigma_w)\\
  &= \ft_w (1) (f \circ\id) = \ft_w f .
\end{align*}
Hence $ \ft_w f  \ft_w^* = \upchi_w(f\circ\tau^{|w|}) \ft_w  \ft_w^*$.
In the compact case, we can set $f=1$ and see that
$ \ft_w \ft_w^* = \upchi_w \ft_w  \ft_w^* \le \upchi_w$.
In general, this makes sense in the multiplier algebra.
Thus we have
\[ 1 = \sum_{|w|=k} \ft_w \ft_w^* \le \sum_{|w|=k} \upchi_w = 1 .\]
Hence $\ft_w \ft_w^* = \upchi_w$.
So the identity $\ft_w f  \ft_w^* = \upchi_w(f\circ\tau^{|w|})$ follows.
\end{proof}

When $n=1$, this consists of a single homeomorphism of $\tilde X$,
which allows Peters to construct a C*-crossed product.
When $n \ge2$, the situation is still much improved.
The first goal is to show that the C*-envelope is determined by this new system.

%%%%%%%%%%%%%%%%%%%%%%%%%%%
\begin{theorem}\label{T:projective}
Let $(X,\sigma)$ be a surjective multivariable dynamical system,
and let $(\tilde X,\tilde\sigma)$ be the associated covering system.
Let $\A(X,\sigma)$ and $\A(\tilde{X},\tilde{\sigma})$ be the associated tensor algebras.
Then
\begin{enumerate}[  $($\em i$)$]
\item $\A(X,\sigma)$ can be embedded into $\A(\tilde{X},\tilde{\sigma})$
via a completely isometric homomorphism.
\item $\cenv(\A(X,\sigma))=\cenv(\mathcal{A}(\tilde{X},\tilde{\sigma}))$.
\end{enumerate}
\end{theorem}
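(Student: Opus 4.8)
The plan is to build the embedding in (i) explicitly using the orbit representations, and then to deduce (ii) by identifying the C*-envelope of $\A(\tilde X,\tilde\sigma)$ with $\ca(\lambda_X(\A(X,\sigma)))$ via Lemma~\ref{L:maximal}. For (i), I would first observe that a point $(\bi,\bx)\in\tilde X$ is, by definition, exactly an infinite tail for $(X,\sigma)$, and the orbit representation $\lambda_{(\bi,\bx)}$ of $\A(\tilde X,\tilde\sigma)$ on Fock space carries a natural action of $\A(X,\sigma)$. Concretely, there should be a completely contractive homomorphism $\Phi:\A(X,\sigma)\to\A(\tilde X,\tilde\sigma)$ sending $\fs_i\mapsto\ft_i$ and $f\mapsto f\circ p$ for $f\in\rC_0(X)$; one checks that this respects the covariance relations using $\sigma_i p=p\tilde\sigma_i$, namely $(f\circ p)\ft_i=\ft_i(f\circ p\circ\tilde\sigma_i)=\ft_i((f\circ\sigma_i)\circ p)$. (A minor point: $f\circ p$ lies in $\rC_b(\tilde X)$, not necessarily $\rC_0(\tilde X)$, since $p$ is only proper; but $\rC_0(\tilde X)$ sits in its multiplier algebra, and one works with the covariance algebra $\A_0$ and then completes, or passes to the unitization. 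When $X$ is compact this is no issue.) To see $\Phi$ is \emph{completely isometric}, I would dilate: given $(\bi,\bx)\in\tilde X$, the infinite tail representation $\lambda_{\bi,\bx}$ of $\A(X,\sigma)$ factors through $\Phi$ followed by a representation of $\A(\tilde X,\tilde\sigma)$ — more precisely, the inductive-limit structure used to build $\lambda_{\bi,\bx}$ is exactly that of $\lambda_{(\bi,\bx)}$ for the system $(\tilde X,\tilde\sigma)$, whose points are single Fock spaces because $\tilde\sigma$ has disjoint ranges (Lemma~\ref{L:p proper}). Taking the direct sum over all infinite tails, $\Lambda_{\tilde X}\circ\Phi$ dominates $\lambda_{X,2}$, and adding the $U$-part recovers $\lambda_X$, which is completely isometric by Lemma~\ref{L:maximal}. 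Hence $\Phi$ is completely isometric.

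For (ii), the key observation is that $\A(\tilde X,\tilde\sigma)$ admits an explicit description of its C*-envelope via Lemma~\ref{L:maximal} applied to $(\tilde X,\tilde\sigma)$: it is $\ca(\lambda_{\tilde X}(\A(\tilde X,\tilde\sigma)))$. But $\tilde{\tilde X}=\tilde X$ by Lemma~\ref{L:p proper}, so every infinite tail in $\tilde X$ begins at a point already in $\tilde X$, and consequently the orbit representations $\lambda_{(\bi,\bx)}$ are \emph{themselves} maximal (the corresponding point is never in the range of any $\tilde\sigma_i$ precisely when it already exhausts its own tail — in fact $\tilde\sigma$ being "invertible" via $\tau$ means the second summand $\lambda_{\tilde X,2}$ reduces to orbit representations at points of $\tilde X$ with no predecessor, and the inductive limit is trivial). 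So $\cenv(\A(\tilde X,\tilde\sigma))=\ca(\Lambda_{\tilde X}(\A(\tilde X,\tilde\sigma)))$ with $\Lambda_{\tilde X}$ the full Fock representation. Now restrict $\Lambda_{\tilde X}$ along $\Phi$: I claim $\ca(\Lambda_{\tilde X}(\Phi(\A(X,\sigma))))=\ca(\Lambda_{\tilde X}(\A(\tilde X,\tilde\sigma)))$. The forward inclusion is clear; for the reverse, one must recover $\ft_i$ and all of $\rC_0(\tilde X)$ inside the C*-algebra generated by $\{\Lambda_{\tilde X}(\ft_i),\Lambda_{\tilde X}(f\circ p):f\in\rC_0(X)\}$. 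The isometries $\ft_i$ are already there; the issue is the functions. Here Lemma~\ref{L:calculation} is the workhorse: $\ft_w(f\circ p)\ft_w^*=\upchi_w\cdot(f\circ p\circ\tau^{|w|})=\upchi_w\cdot(f\circ p_{|w|})$, so from the generators we obtain all functions of the form $\upchi_w\cdot(f\circ p_k)$ for $f\in\rC_0(X)$, $k=|w|$. Since $p_k(\bi,\bx)=x_k$, varying $k$ and $f$ produces functions depending on arbitrarily many coordinates of $(\bi,\bx)$, and together with the clopen indicators $\upchi_w$ (which lie in the algebra as $\ft_w\ft_w^*$, at least in the compact case, or in the multiplier algebra in general) these separate points of $\tilde X$ and, by Stone–Weierstrass, generate $\rC_0(\tilde X)$. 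Thus $\Phi$ induces an isomorphism of C*-envelopes fixing the image of $\A(X,\sigma)$, which is (ii).

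The main obstacle I anticipate is not the Stone–Weierstrass density argument per se but the \emph{bookkeeping in the non-compact case}: $f\circ p$ need not be in $\rC_0(\tilde X)$, $\upchi_w$ lives only in the multiplier algebra, and one must be careful that the relevant approximations take place inside the non-unital C*-algebra $\ca(\Lambda_{\tilde X}(\Phi(\A(X,\sigma))))$ rather than its multiplier algebra. The clean way around this is to note that products $\upchi_w\cdot(f\circ p_k)$ with $f\in\rC_0(X)$ genuinely lie in $\rC_0(\tilde X)$ (since $p_k$ is proper on the clopen set $\tilde X_w$, by the properness estimate in Lemma~\ref{L:p proper}), so one never actually needs $\upchi_w$ or $f\circ p$ individually — only these products — and the density argument can be run entirely within $\rC_0(\tilde X)$. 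Verifying that these products do separate the points of $\tilde X$ and vanish nowhere simultaneously on no point (i.e. for every $(\bi,\bx)$ some such product is nonzero, which follows since some $\upchi_w((\bi,\bx))=1$ and $\rC_0(X)$ does not vanish identically at $x_{|w|}$) completes the verification. The rest — checking $\Phi$ is a homomorphism, assembling the direct sums, and citing Lemma~\ref{L:maximal} twice — is routine.
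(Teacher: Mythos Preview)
Your overall architecture is right and matches the paper's, but there is a genuine gap in part~(ii): the orbit representations $\lambda_{(\bi,\bx)}$ of $\A(\tilde X,\tilde\sigma)$ are \emph{not} maximal. The system $(\tilde X,\tilde\sigma)$ is itself surjective, so every $(\bi,\bx)\in\tilde X$ lies in the range of some $\tilde\sigma_i$ (indeed $(\bi,\bx)=\tilde\sigma_{i_0}(\tau(\bi,\bx))$), and by the criterion you cite this means $\lambda_{(\bi,\bx)}$ admits nontrivial dilations. Consequently $\ca(\Lambda_{\tilde X}(\A(\tilde X,\tilde\sigma)))$ is not the C*-envelope, and your identification fails at that step. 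The fact that $\tilde{\tilde X}=\tilde X$ does not say the orbit representations are maximal; it says each point of $\tilde X$ determines a \emph{unique} infinite tail, so the infinite tail representations of $(\tilde X,\tilde\sigma)$ are parametrized by $\tilde X$ itself. The fix is to use those infinite tail representations $\tau_{(\bi,\bx)}$ (on the inductive-limit Hilbert space, not on a single Fock space) and assemble them into $\lambda_{\tilde X}$. Then one checks, exactly as you suggest for the orbit picture, that $\tau_{(\bi,\bx)}\circ\Phi=\lambda_{\bi,\bx}$, so $\lambda_{\tilde X}\circ\Phi\simeq\lambda_X$. This single identity gives both (i) (complete isometry of $\Phi$, via Lemma~\ref{L:maximal}) and the setup for (ii), after which your Stone--Weierstrass argument with Lemma~\ref{L:calculation} goes through verbatim and is exactly the paper's.

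Two smaller points. First, your worry that $f\circ p$ might lie only in $\rC_b(\tilde X)$ is unfounded: Lemma~\ref{L:p proper} says $p$ is proper, so $f\circ p\in\rC_0(\tilde X)$ for every $f\in\rC_0(X)$, and no multiplier bookkeeping is needed to define $\Phi$. Second, in the surjective case $U=X\setminus\bigcup_i\sigma_i(X)=\varnothing$, so there is no ``$U$-part'' to add; $\lambda_X=\lambda_{X,2}$ already. (For part~(i) the paper actually takes a slightly cheaper route, using the full Fock representations on both sides: $\lambda_{(\bi,\bx)}\circ\Phi=\lambda_{x_0}$ on a single Fock space, hence $\Lambda_{\tilde X}\circ\Phi$ contains $\Lambda_X$, which is completely isometric. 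Your claim that $\Lambda_{\tilde X}\circ\Phi$ dominates $\lambda_{X,2}$ conflates these two pictures.)
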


\begin{proof}
Let the generators of $\A(\tilde X,\tilde\sigma)$ be $\ft_1,\dots,\ft_n$.
Embed $C_0(X)$ into $C_0(\tilde{X})$ via $\rho(f) = f \circ p$.
This is a $*$-monomorphism because $p$ is surjective.
Then we can embed the covariance algebra $\A_0(X,\sigma)$ into $\A(\tilde X,\tilde\sigma)$
by defining $\rho(\fs_i f) = \ft_i \rho(f)$ and extending to the homomorphism
\[ \rho\big( \sum \fs_w f_w \big) = \sum \ft_w (f_w \circ p) .\]
The important observation is that $\rho$ satisfies the covariance relations
\[
 \rho(f) \rho(\fs_i ) = (f\circ p)  \ft_i = \ft_i (f \circ p \circ \tilde\sigma_i)
 = \ft_i (f \circ \sigma_i \circ p) = \rho(f) \rho(f \circ \sigma_i) .
\]
Therefore by the universal property of the tensor algebra, this extends to a
completely contractive representation of $\A(X,\sigma)$.

Let us verify that this map is a complete isometry.
We will use the fact that the full Fock representation is completely isometric.
For each $x\in X$, choose $(\bi,\bx) \in \tilde X$ so that $p((\bi,\bx)) = x$.
Observe that $\lambda_{(\bi,\bx)}\rho = \lambda_x$.  Indeed, both representations
send $\fs_i$ to the left shifts $L_i$, so it suffices to check what happens to $\rC_0(X)$.
If $w = j_k j_{k-1} \dots j_1$, then
\[ p \tilde\sigma_w(\bi,\bx) = \sigma_w p(\bi,\bx) = \sigma_w(x_0) .\]
So for any $f \in \rC_0(X)$,
\[
 \lambda_{(\bi,\bx)} \rho(f) \xi_w
 =  (f\circ p)(\tilde\sigma_w(\bi,\bx)) \xi_w
 = f(\sigma_w (x_0)) \xi_w = \lambda_x(f) \xi_w .
\]
Whence it follows for all elements $A \in \fM_m(\A_0(X,\sigma))$ that
\begin{align*}
 \|A\| &= \big\| (\id_{\fM_m} \otimes \Lambda_X) (A) \big\| \\&=
 \big\| (\id_{\fM_m} \otimes \Lambda_{\tilde X}) ( (\id_{\fM_m} \otimes\rho)(A))  \big\| \\&=
 \| (\id_{\fM_m} \otimes\rho)(A) \| .
\end{align*}
So this embedding is a complete isometry.

To prove (ii), we use the representations $\lambda_X$ and $\lambda_{\tilde X}$.
As these systems are surjective, we only need to consider infinite tail representations.
If $(\bi,\bx) \in \tilde X$, then the choice of the infinite tail beginning with this point
is uniquely determined by Lemma~\ref{L:p proper}.
This representation $\lambda_{\bi, (\tilde x_0,\tilde x_1,\dots)}$ will be denoted by
$\tau_{(\bi,\bx)}$.
Now if $x \in X$, the infinite tails beginning at $x$ are precisely the points
$(\bi,\bx) \in \tilde X$ such that $p((\bi,\bx)) = x$.
Arguing exactly as in the previous paragraph, we see that
$\tau_{(\bi,\bx)} \rho = \lambda_{(\bi,\bx)}$.

By Lemma~\ref{L:maximal}, we have
$\cenv(\A(\tilde X, \tilde\sigma)) = \ca(\lambda_{\tilde X}(\A(\tilde X, \tilde\sigma)))$.
Moreover the previous paragraph shows that $\lambda_{\tilde X} \rho \simeq \lambda_X$
yields a maximal completely isometric representation of $\A(X,\sigma)$ into this C*-algebra.
So $\cenv(\A(X,\sigma))$ is the C*-algebra generated by its image.
So it suffices to demonstrate that this is the whole algebra.

It is convenient to consider $\cenv(\A(\tilde X, \tilde\sigma))$ as generated by 
$\rC_0(\tilde X)$ and
$\{\ft_i f : 1 \le i \le n,\ f\in\rC_0(\tilde X) \}$.
Observe that $\ft_1,\dots,\ft_n$ are Cuntz isometries, meaning that they have 
orthogonal ranges with sum to the whole space.
This is evident in each infinite tail representation.
Moreover, one can see that $\ft_i\ft_i^* = \upchi_i$.
In the non-unital case, this makes sense in the multiplier algebra of 
$\cenv(\A(\tilde X, \tilde\sigma))$
which contains $\ft_1,\dots,\ft_n$ and all bounded continuous functions on $\tilde X$.
This follows from Lemma~\ref{L:calculation} because
\[ \ft_i\ft_i^* =  \ft_i 1 \ft_i^* = \upchi_i  \ft_i\ft_i^*  \upchi_i \le \upchi_i .\]
Since $1 = \sum  \ft_i\ft_i^* \le  \sum_i \upchi_i = 1$, we have equality.

The subalgebra $\rho(\cenv(\A(X,\sigma))$ is generated by the algebra of
functions $\{f \circ p : f \in \rC_0(X) \}$ and $\{\ft_i (f\circ p) : 1 \le i \le n,\ f\in\rC_0(X) \}$.
It suffices to show that all of $\rC_0(\tilde X)$ is in the smaller algebra.
To this end, it is enough to show that the algebra 
$\bigcup_{w \in \Fn} \ft_w \rho(\rC_0(X)) \ft_w^*$
is dense in $\rC_0(\tilde{X})$.
Now by Lemma~\ref{L:calculation}, for $|w|=k \ge 1$,
\[ \ft_w (f\circ p ) \ft_w^* = \upchi_w (f\circ p \circ \tau^k) \ft_w \ft_w^* =  \upchi_w (f\circ p_k) .\]
By the Stone--Weierstrass Theorem, it suffices to show that this subalgebra separates points
and does not vanish anywhere.  The latter is clear.
If $(\bi,\bx) \ne (\bj,\by)$, then either $\bi \ne \bj$ or for some $k\ge0$, $x_k \ne y_k$.
In the former case, there is a $k\ge 1$ so that the initial segment of $\bi$ is a word $w$
which differs from the initial segment of $\bj$.
So choose $f\in\rC_0(X)$ so that $f(x_k) \ne 0$, and if $y_k\ne x_k$, make $f(y_k) = 0$.
Then in either case,
\begin{align*}
 \ft_w (f\circ p)  \ft_w(\bi,\bx) &= \upchi_w(\bi,\bx) f(x_k) \ne 0,\\ \intertext{and}
 \ft_w (f\circ p)  \ft_w(\bj,\by) &= \upchi_w(\bj,\by) f(y_k) = 0.
\end{align*}

It follows that $\ca(\rho(\A(X,\sigma))) = \cenv(\A(\tilde X,\tilde\sigma))$.
\end{proof}

%%%%%%%%%%%%%%%%%%%%%%%%%%%
\begin{remark} 
This proof and the various algebraic relations imply that 
$\cenv(\A(X,\sigma))$ is the closed span of
\[ \{ \ft_v f \ft_w^* : v,w \in \Fn,\ f \in \rC_0(\tilde X) \} .\]
For instance, suppose $\tilde X$ compact. 
When 
$\spn\{  \ft_w \ft_w^* : w \in \Fn \}$ is dense in $\rC(\tilde X)$ 
(or equivalently when the characteristic functions of the sets $\tilde X_w$ 
for $w \in \Fn$ separate the points of $\tilde X$), then $\cenv(\A(X,\sigma))$ 
is isomorphic to the Cuntz algebra $\mathcal{O}_n$.
\end{remark}

%%%%%%%%%%%%%%%%%%%%%%%%%%%
\begin{example}\label{E:On}
Consider $X=\{1,\dots,n \}^{\mathbb{N}}$ and for $1 \leq i \leq n$, set
\[ \sigma_i((x_0,x_1,x_2,\dots))=(i,x_0,x_1,x_2,\dots) .\]
Obviously $X=\tilde X$ and the characteristic functions of the $\tilde X_w$'s 
separate the points of $\tilde X$.
So $\cenv(\A(X,\sigma))=\mathcal{O}_n$.
\end{example}

As a consequence of this theorem, we are able to describe the C*-algebra
$\cenv(\A(X,\sigma))$ as a groupoid C*-algebra.
Following \cite{AD} or \cite{Dea}, we denote by $\ca(\tilde{X},\tau)$ the groupoid 
C*-algebra associated to the local homeomorphism $\tau$.
The route to the proof is via Cuntz--Pimsner algebras of the associated 
C*-correspondences, which are shown to be isomorphic.

%%%%%%%%%%%%%%%%%%%%%%%%%%%
\begin{corollary}
Let $(X,\sigma)$ be a surjective multivariable dynamical system.
Then,
\[ \cenv(\A(X,\sigma)) \simeq \ca(\tilde{X},\tau) .\]
\end{corollary}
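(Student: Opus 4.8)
The plan is to identify both $\cenv(\A(X,\sigma))$ and $\ca(\tilde X,\tau)$ with the Cuntz--Pimsner algebra of a common C*-correspondence over $\rC_0(\tilde X)$, and then invoke the known uniqueness of such an algebra. By Theorem~\ref{T:projective}, $\cenv(\A(X,\sigma)) = \cenv(\A(\tilde X,\tilde\sigma))$, so it suffices to work entirely within the covering system $(\tilde X,\tilde\sigma)$, which is surjective (indeed, $\tilde X = \tilde{\tilde X}$ by Lemma~\ref{L:p proper}). By \cite{DK}, $\cenv(\A(\tilde X,\tilde\sigma))$ is the Cuntz--Pimsner algebra $\O_{E_{\tilde\sigma}}$ of the C*-correspondence $E_{\tilde\sigma}$ attached to the multivariable system $(\tilde X,\tilde\sigma)$; concretely $E_{\tilde\sigma} = \bigoplus_{i=1}^n C_0(\tilde X)$ with the left action of $f$ on the $i$-th summand being multiplication by $f\circ\tilde\sigma_i$, and since the $\tilde\sigma_i$ have disjoint clopen ranges $\tilde X_i$ covering $\tilde X$, this is precisely the correspondence $C_0(\tilde X,\tau)$ built from the local homeomorphism $\tau$ (with $\tau|_{\tilde X_i} = \tilde\sigma_i^{-1}$).

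Next I would recall, following \cite{AD} and \cite{Dea}, that the groupoid C*-algebra $\ca(\tilde X,\tau)$ associated to the local homeomorphism $\tau$ is itself naturally isomorphic to the Cuntz--Pimsner algebra of exactly this correspondence $C_0(\tilde X,\tau)$: the Deaconu--Renault groupoid of $\tau$ has $\ca$ equal to $\O_{C_0(\tilde X,\tau)}$. This is the content of the relevant results in those references, so the isomorphism $\ca(\tilde X,\tau) \simeq \O_{C_0(\tilde X,\tau)}$ may be cited. Combining the two identifications,
\[
\cenv(\A(X,\sigma)) = \cenv(\A(\tilde X,\tilde\sigma)) \simeq \O_{C_0(\tilde X,\tau)} \simeq \ca(\tilde X,\tau),
\]
as claimed.

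The main point requiring care is the precise matching of the two C*-correspondences. One must check that the left action defining $E_{\tilde\sigma}$ is injective and the correspondence is full — both are immediate here since $\bigcup_i \tilde X_i = \tilde X$ and the ranges are clopen, so there are no Katsura ideal subtleties — and that the Cuntz--Pimsner covariance conditions coincide with the defining relations of $\ca(\tilde X,\tau)$ under the identification $\ft_i\ft_i^* = \upchi_i$ established in Lemma~\ref{L:calculation}. The non-unital case (when $\tilde X$ is not compact) needs the observation, already used in the proof of Theorem~\ref{T:projective}, that the relevant relations live in the multiplier algebra; the groupoid picture handles this automatically. I would also note that the description in the Remark of $\cenv(\A(X,\sigma))$ as the closed span of $\{\ft_v f\ft_w^* : v,w\in\Fn,\ f\in\rC_0(\tilde X)\}$ visibly matches the standard spanning set of functions on the Deaconu--Renault groupoid, giving an independent consistency check on the isomorphism.
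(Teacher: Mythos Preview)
Your approach is essentially the same as the paper's: both identify each algebra as the Cuntz--Pimsner algebra of a C*-correspondence over $\rC_0(\tilde X)$ and then observe that the two correspondences are unitarily equivalent. The paper makes the equivalence explicit via $U\xi = \xi\circ h$ with $h(x,i)=\tilde\sigma_i(x)$, and it cites \cite{DKM} (not \cite{AD} or \cite{Dea}) for the fact that $\ca(\tilde X,\tau)$ is the Cuntz--Pimsner algebra of the $\tau$-correspondence; you should adjust your reference accordingly.
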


\begin{proof}
Deaconu, Kumjian and Muhly \cite{DKM} prove that $\ca(\tilde{X},\tau)$ is the
Cuntz-Pimsner C*-algebra associated to the C*-correspondence
\mbox{$E=C_0(\tilde{X})$} endowed with the $C_0(\tilde{X})$-valued inner product
\[
 \langle \xi,\eta \rangle (x) = \sum_{\tau(y)=x} \ol{\xi(y)}\eta(y)
 = \sum_{i=1}^n \ol{\xi(\tilde{\sigma}_i(x))}\eta(\tilde{\sigma}_i(x))
\]
for $\eta, \xi \in E$ and $x \in \tilde{X}$.
The left and right actions of $\rC_0(\tilde X)$ are given by
\[ f \cdot \xi (x)=f(x)\xi(x) \qand \xi \cdot f (x)=\xi(x)f(\tau(x)) \]
for $\xi \in E$ and $f \in C_0(\tilde{X})$.

On the other hand, in \cite{DK} it is shown that $\cenv(\A(\tilde{X},\tilde{\sigma}))$
is the Cuntz-Pimsner algebra associated to the C*-correspondence
$F=C_0(\tilde{X} \times n)$ over $C_0(\tilde{X})$ in the following way:
The $C_0(\tilde{X})$-valued inner product is
\[ \langle \xi,\eta \rangle (x)= \sum_{i=1}^n \overline{\xi(x,i)}\eta(x,i)
\]
and the left and right actions of $\rC_0(\tilde X)$ are
\[
 f \cdot \xi (x,i) = f(\tilde{\sigma}_i(x)) \xi(x,i) \qand
 \xi \cdot f (x,i)=\xi(x,i)f(x)
\]
for $\eta, \xi \in F$, $f \in C_0(\tilde{X})$ and $x \in \tilde{X}$.

To prove that these two Cuntz-Pimsner algebras are $*$-ismorphic, we will show that the
C*-correspondences $E$ and $F$ are unitarily equivalent,
i.e. there is a $C_0(\tilde{X})$-bimodule map from E onto F which preserves the inner products.

Define $h: \tilde{X} \times n \to \tilde{X}$ by  $h((x,i))=\tilde{\sigma}_i(x).$
Then consider the map $U$ from $\rC_0(\tilde{X})$ to $\rC_0(\tilde{X} \times n)$ by
$U\xi = \xi \circ h$.
It is easy to verify that $U$ is a $C_0(\tilde{X})$-bimodule map from E onto F
which preserves the inner products.
\end{proof}
\begin{example}
To illustrate this corollary, let's have another look at Example~\ref{E:On}.
In this example, the local homeomorphism $\tau: \tilde X \to \tilde X$ is just the left shift 
\[ \tau((x_0,x_1,x_2,\dots))=(x_1,x_2,\dots) .\]
By \cite[Example 1]{Dea}, the associated groupoid C*-algebra 
$\ca(\tilde{X},\tau)$ is $\mathcal{O}_n$. 
\end{example}

The next step is to describe the C*-envelope of $\A(X,\sigma)$ as the crossed product
$\fB \rtimes_\alpha \bN$ of a C*-algebra $\fB$ by a single endomorphism $\alpha$.
This construction was introduced by Cuntz \cite{C1} when he described his algebras
$\mathcal{O}_n$ as crossed products of UHF algebras by endomorphisms.
This construction applies more generally (see \cite{C2} and \cite{Sta}
for the non-unital case).

We recall how the crossed product by an endomorphism is defined.
Let $\fB$ be a C*-algebra and let $\alpha$ be an injective $*$-homomorphism of $\fB$ into itself.
In the unital case, there exists a unique C*-algebra $\fB \rtimes_{\alpha} \bN$
generated by $\fB$ and an isometry $S$ such that
\[ SbS^*=\alpha (b) \qforal b \in \fB \]
and satisfying the universal property:
for any  $*$-homomorphism $\pi$ of $\fB$ into $\B(\H)$ and any isometry $T \in B(\H)$
such that  $T\pi(b)T^*=\pi(\alpha (b))$, there is a $*$-homomorphism
$\tilde{\pi}: \fB \rtimes_{\alpha} \bN \to \B(\H)$ extending $\pi$
such that $\tilde{\pi}(S)=T$.
In the non-unital case, the isometry lives in the multiplier algebra.
$\fB \rtimes_{\alpha} \bN$ is defined as the universal algebra generated by
$\fB$ and $\{ Sb : b \in \fB \}$; and the universal property is that the
map $\pi$ above extends to $\tilde\pi$ satisfying $\tilde{\pi}(Sb)=T\pi(b)$
for all $b \in \fB$.

As usual, this crossed product has a family of gauge automorphisms $\gamma_z$
for $z \in \bT$ determined by
\[ \gamma_z(b) = b \FOR b\in\fB \qand \gamma_z(S)=zS .\]
A standard argument shows that  integration over $\bT$ yields a faithful conditional expectation
$\Gamma(A) = \frac1{2\pi}\int_0^{2\pi} \gamma_{e^{i\theta}}(A) \,d\theta$
from $\fB \rtimes_{\alpha} \bN$ onto $\fB$.

\smallbreak
We turn to the definition of $\fB$ in our setting.
We know from Theorem~\ref{T:projective} and the Cuntz and covariance relations
of $\fA := \cenv(\A(X,\sigma))$ that this algebra is the closed span of words of the form
\[ \{ \ft_v f \ft_w^* : v,w \in \Fn,\ f \in \rC_0(\tilde X) \} .\]
The universal property of the C*-envelope also guarantees that for each $z\in\bT$,
there are  $*$-automorphisms $\psi_z$ of $\fA$ determined by
$\psi_z(f)=f$ for $f \in \rC_0(\tilde X)$ and $\psi_z(\ft_i) = z\ft_i$ for $1\le i \le n$.
We define an expectation of $\Psi$ of $\fA$ into itself by integration:
$\Psi(A) = \frac1{2\pi}\int_0^{2\pi} \psi_{e^{i\theta}}(A) \,d\theta$.
It is easy to see that
\[
 \Psi(\ft_u f \ft_v^*) =
 \begin{cases} \ft_u f \ft_v^* &\qif |u|=|v| \\
                      0 &\quad\text{otherwise}
 \end{cases}
\]
Define
\[ \fB = \ran(\Psi) = \ol{\spn\{ \ft_u f \ft_v^* :
 u,v \in \Fn,\ |u|=|v|,\ f \in \rC_0(\tilde X) \}} .
\]

For $k\ge0$, define
\[
 \fB_k = \ol{\spn\{ \ft_u f \ft_v^* :
 u,v \in \Fn,\ |u|=|v|=k,\ f \in \rC_0(\tilde X) \}} .
\]
Since $\ft_v^*\ft_u = \delta_{u,v}$ when $|u|=|v|$, it is evident that $\fB_k$ is
a C*-subalgebra of $\cenv(\A(X,\sigma))$ which is $*$-isomorphic to
$\fM_{n^k}(C_0(\tilde{X}))$ via the map which sends
$\ft_u f \ft_v^*$ to $f \otimes E_{u,v}$, where
$\{E_{u,v} : |u|=|v|=k \}$ denote the matrix units of $\fM_{n^k}$.

Moreover, $\fB_k$ is contained in $\fB_{k+1}$ because if $|v|=|w|=k$, then
\[
 \ft_u f \ft_v^* = \ft_u f \sum_{i=1}^n \ft_i\ft_i^* \ft_v^* =
 \sum_{i=1}^n \ft_{ui} (f\circ\tilde\sigma_i) \ft_{vi}^*
\]
(Observe that this is not imbedded in the usual manner of UHF algebras because
of the fact that $\fB_0 = \rC_0(\tilde X)$ is imbedded into $\fB_k$ by
sending $f$ to the diagonal operator $\diag(f\circ\sigma_w)$.
Since the maps $\sigma_w$, for $|w|=k$, are homeomorphisms onto pairwise disjoint clopen
subsets of $\tilde X$, this carries $\fB_0$ onto the full diagonal of $\fB_k$.)
It follows that
\[ \fB = \ol{\bigcup\strut_{k\ge0} \fB_k} . \]
In particular, $\fB$ is a C*-subalgebra of $\fA$ which is the inductive
limit of homogeneous C*-algebras.

Define a proper isometry $V = \frac1{\sqrt n} \sum_{i=1}^n \ft_i$ in $\fA$.
Observe that
\[ \alpha(b) = VbV^* \qfor b \in \fB \]
determines a $*$-endomorphism.
Thus we can define the crossed product $\fB \times_\alpha \bN$.

%%%%%%%%%%%%%%%%%%%%%%%%%%%
\begin{theorem} \label{T:endo}
Let $(X,\sigma)$ be a multivariable dynamical system with $n \ge 2$. 
Then, with the above notation,
\[ \cenv(\A(X,\sigma)) \simeq \fB \times_\alpha \bN .\]
\end{theorem}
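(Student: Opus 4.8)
The plan is to realize $\cenv(\A(X,\sigma)) = \fA$ as a quotient of the universal crossed product $\fB \times_\alpha \bN$, then to show this quotient map is injective by a gauge-invariance argument. First I would check the covariance relation: the proper isometry $V = \frac{1}{\sqrt n}\sum_i \ft_i$ satisfies $VbV^* = \alpha(b)$ for $b\in\fB$ by definition, and one verifies that $\fA$ is generated by $\fB$ together with $\{Vb : b\in\fB\}$ inside the multiplier algebra. Indeed the generators $\ft_v f \ft_w^*$ of $\fA$ can be rewritten using $V$: since $V^*\ft_i = \frac{1}{\sqrt n} 1$ (as the $\ft_i$ are Cuntz isometries) one has $\ft_i = \sqrt n\, V \ft_i^* \ft_i = \sqrt n\, V \upchi_i$, so each $\ft_i$, and hence each $\ft_v f \ft_w^*$, lies in the algebra generated by $\fB$ and $V$. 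Combined with the universal property of $\fB\times_\alpha\bN$ — applied to the inclusion $\pi:\fB\hookrightarrow\fA$ and the isometry $V$ satisfying $V\pi(b)V^* = \pi(\alpha(b))$ — this produces a surjective $*$-homomorphism $\Phi:\fB\times_\alpha\bN \twoheadrightarrow \fA$ with $\Phi(S)=V$ and $\Phi|_\fB = \id$.

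Next I would exploit the two gauge actions. The crossed product carries the gauge automorphisms $\gamma_z$ with $\gamma_z|_\fB = \id$ and $\gamma_z(S) = zS$, with faithful conditional expectation $\Gamma$ onto $\fB$; meanwhile $\fA$ carries the automorphisms $\psi_z$ with $\psi_z(\ft_i)=z\ft_i$ and expectation $\Psi$ onto $\fB$. Since $\Phi(S)=V = \frac{1}{\sqrt n}\sum_i\ft_i$, we get $\Phi\circ\gamma_z = \psi_z\circ\Phi$, hence $\Phi\circ\Gamma = \Psi\circ\Phi$. Now suppose $A\in\fB\times_\alpha\bN$ with $\Phi(A)=0$. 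Then $\Phi(\Gamma(A^*A)) = \Psi(\Phi(A^*A)) = \Psi(0) = 0$. But $\Gamma(A^*A)\in\fB$, and $\Phi|_\fB = \id$ is isometric on $\fB$, so $\Gamma(A^*A)=0$; faithfulness of $\Gamma$ then forces $A^*A = 0$, i.e. $A=0$. Thus $\Phi$ is injective, hence a $*$-isomorphism.

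The one genuine subtlety — and the step I expect to be the main obstacle — is the bookkeeping in the non-unital case, where $V$, the $\ft_i$, and the characteristic functions $\upchi_w$ live only in the multiplier algebra $M(\fA)$, and the crossed product is defined via the generating set $\{Sb: b\in\fB\}$ rather than via $S$ itself. I would need to be careful that the universal property of $\fB\times_\alpha\bN$ is being applied in its correct non-unital formulation: the map $\pi$ extends to $\tilde\pi$ with $\tilde\pi(Sb) = V\pi(b)$, and one must confirm that the elements $Vb$ for $b\in\fB$ (equivalently $\ft_i g$ for $g\in\rC_0(\tilde X)$, using $\ft_i g = \sqrt n\, V \upchi_i g$ and $\upchi_i g\in\rC_0(\tilde X)\subset\fB$) generate a dense subalgebra of $\fA$ along with $\fB$. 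The relations $\ft_v^*\ft_u = \delta_{u,v}$, $\sum_i\ft_i\ft_i^*=1$, and $\ft_i\ft_i^* = \upchi_i$ established in Lemma~\ref{L:calculation} are exactly what is needed to move freely between the $\ft$-picture and the $(V,\fB)$-picture; once the multiplier-algebra computations are pinned down, the gauge argument above goes through verbatim since $\Gamma$ and $\Psi$ are honest maps on the C*-algebras themselves. General Cuntz--Pimsner theory (e.g. the gauge-invariant uniqueness theorem) could also be invoked to shortcut the injectivity step, but the direct expectation argument seems cleanest here given that both expectations are already in hand.
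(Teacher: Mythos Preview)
Your overall strategy is exactly the paper's: produce a covariant pair $(\fB,V)$ inside $\fA$, invoke the universal property of $\fB\times_\alpha\bN$ to get a surjection onto $\fA$, and kill the kernel using the intertwining of the two gauge actions and the resulting equality $\Psi\pi=\pi\Gamma$. The paper compresses your expectation argument into the phrase ``gauge invariant uniqueness theorem,'' but it is the same mechanism, and your explicit version is fine.

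There is one genuine computational slip in your generation step. From $V^*\ft_i=\tfrac1{\sqrt n}1$ you cannot conclude $\ft_i=\sqrt n\,V\ft_i^*\ft_i$: indeed $\ft_i^*\ft_i=1$, so the right-hand side is $\sqrt n\,V\ne\ft_i$, and $\upchi_i=\ft_i\ft_i^*$ is the \emph{range} projection, not $\ft_i^*\ft_i$. The identity $\ft_i=\sqrt n\,V\upchi_i$ is therefore false; what is true is
\[
 \upchi_i V=\ft_i\ft_i^*\cdot\tfrac1{\sqrt n}\sum_j\ft_j=\tfrac1{\sqrt n}\ft_i,
 \qquad\text{i.e.}\quad \ft_i=\sqrt n\,\upchi_i V,
\]
with $\upchi_i$ on the left. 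This still recovers the $\ft_i$'s from $\fB$ (or its multipliers) and $V$, so your conclusion survives. The paper sidesteps the multiplier issue entirely with the cleaner identity
\[
 \big(n^{k/2}\,\ft_u f\,\ft_v^*\ft_1^{*k}\big)\,V^k=\ft_u f\,\ft_v^*
 \qquad(|u|-|v|=k\ge0),
\]
where the left factor lies in $\fB_{|u|}\subset\fB$; you may prefer to use this in place of your formula, especially in the non-unital case.
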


\begin{proof}
First we show that $\cenv(\A(X,\sigma))$ is generated by $\fB$ and $V$.
This is straightforward.  If $\ft_u f \ft_v^*$ is given, with $|v| \le |u|$,
let $k=|u|-|v|$.  Then
\[ (n^{k/2} \ft_u f \ft_v^* \ft_1^{*k} ) V^k = \ft_u f \ft_v^* .\]
So $\ft_u f \ft_v^*$ belongs to $\ca(\fB,V)$.
But these elements together with their adjoints span $\cenv(\A(X,\sigma))$.

By the universal property of $\fB \times_\alpha \bN$, there is a $*$-homomorphism
\[ \pi : \fB \rtimes_{\alpha} \bN \to \cenv(\A(X,\sigma)) \]
such that $\pi|_\fB = \id$ and $\pi(S) = V$.
This is surjective, and it is easy to see that
$\Psi \pi = \pi \Gamma$.
The gauge invariant uniqueness theorem shows that $\pi$ is an isomorphism.
\end{proof}

%%%%%%%%%%%%%%%%%%%%%%%%%%%
\begin{remark}\label{R:Peters result}
When $n=1$, $\fB$ is just $\rC_0(\tilde X)$ and the isometry $V$ is actually a unitary.
Thus this result recovers Peters result \cite{Pet} describing the C*-envelope 
as a C*-crossed product by $\mathbb{Z}$.
\end{remark}

%%%%%%%%%%%%%%%%%%%%%%%%%%%%%%%%%%%%%%%%%%%
\section{The non-surjective case: adding a tail}

The previous section only applies when the union of the ranges $\bigcup_{i=1}^n\sigma_i(X)$
is all of $X$.  When the system is not surjective, there is a technique called ``adding a tail''
which comes from the construction of graphs without sources from ones that have them.
This is now a standard procedure.

Given a dynamical system $(X,\sigma )$,
let $U=X \bsl \cup_{i=1}^{n} \sigma_i (X)$.
Define $T= \{ (u,k) : u \in \ol{U},\ k < 0 \}$ and $X^T=X \cup T$.
For each $1\le i \le n$, we extend $\sigma_i$ to a map $\sigma_i^T : X^T \to X^T$ by
\[ \sigma_i^T(u,k) = (u,k+1) \FOR k < -1, \qand \sigma_i^T(u,-1) = u .\]
We can consider the new multivariable dynamical system $(X^T,\sigma^T)$.

%%%%%%%%%%%%%%%%%%%%%%%%%%%
\begin{theorem} \label{T:surjective c.isom.}
Let $(X,\sigma)$ be a non-surjective multivariable dynamical system,
and let $(X^T,\sigma^T)$ be the system with an added tail. Then,
\begin{enumerate}[  \em (i)]
\item $\A(X,\sigma)$ can be embedded in $\A(X^T,\sigma^T)$ via
a completely isometric homomorphism and its image is completely contractively complemented in $\A(X^T,\sigma^T)$.
\item $\cenv(\A(X,\sigma))$ is a full corner of $\cenv(\A(X^T,\sigma^T))$.
\end{enumerate}
\end{theorem}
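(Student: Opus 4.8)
First note that $X$ is a clopen subset of $X^T$ (each slice $\overline U\times\{k\}$ of $T$ is clopen), that $(X^T,\sigma^T)$ is again a proper multivariable system, and --- crucially for later --- that it is \emph{surjective}, since
\[
\bigcup_{i=1}^{n}\sigma^T_i(X^T)=\Big(\bigcup_{i=1}^{n}\sigma_i(X)\Big)\cup\overline U\cup T\ \supseteq\ \Big(\bigcup_{i=1}^{n}\sigma_i(X)\cup U\Big)\cup T=X^T .
\]
Let $\iota\colon\rC_0(X)\to\rC_0(X^T)$ be extension by zero, a $*$-monomorphism onto the corner $\upchi_X\rC_0(X^T)\upchi_X$, and define $\Phi_0$ on the covariance algebra by $\Phi_0\big(\sum_w\fs_w f_w\big)=\sum_w\ft_w\iota(f_w)$. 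The point is that $\Phi_0$ is a well-defined homomorphism into $\A_0(X^T,\sigma^T)$: although $\iota(f)\circ\sigma^T_w$ is \emph{not} $\iota(f\circ\sigma_w)$ (it picks up mass on those tail slices that $\sigma^T_w$ eventually moves into $X$), one nonetheless has $\big(\iota(f)\circ\sigma^T_w\big)\iota(g)=\iota\big((f\circ\sigma_w)g\big)$, because right multiplication by $\iota(g)$ --- supported on $X$ --- annihilates the discrepancy; this gives multiplicativity, and well-definedness follows from $\A_0(X,\sigma)=\bigoplus_w\fs_w\rC_0(X)$. A routine universal-property argument (post-compose $\Phi_0$ with a completely isometric representation of $\A(X^T,\sigma^T)$) extends $\Phi_0$ to a completely contractive homomorphism $\Phi\colon\A(X,\sigma)\to\A(X^T,\sigma^T)$. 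To see $\Phi$ is a complete isometry, note that for $x\in X$ the orbit representation $\lambda^T_x$ of $\A(X^T,\sigma^T)$ satisfies $\lambda^T_x\circ\Phi=\lambda_x$ --- both send $\fs_i$ to $L_i$, and since $\sigma^T_w(x)=\sigma_w(x)\in X$ for every word $w$ the diagonal parts agree --- so $\bigoplus_{x\in X}\lambda^T_x\circ\Phi=\Lambda_X$, which is completely isometric; sandwiching this between $\Phi$ and the (completely contractive) full Fock representation $\Lambda_{X^T}$ forces $\Phi$ to be completely isometric. Finally $p:=\upchi_X$ lies in the multiplier algebra of $\A(X^T,\sigma^T)$, and the compression $A\mapsto pAp$ is a completely contractive idempotent whose range is $\overline{\spn}\{\ft_w\iota(g):w\in\Fn,\ g\in\rC_0(X)\}=\Phi(\A(X,\sigma))$. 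This proves (i).

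\textbf{Part (ii).} Write $\fA^T:=\cenv(\A(X^T,\sigma^T))$. Since $X^T$ is surjective, Theorem~\ref{T:projective} and the corollary following it identify $\fA^T=\cenv(\A(\widetilde{X^T},\widetilde{\sigma^T}))\simeq\ca(\widetilde{X^T},\tau^T)$, inside which $\ft_1,\dots,\ft_n$ are Cuntz isometries over $\rC_0(\widetilde{X^T})$. Composing $\Phi$ with the inclusion $\A(X^T,\sigma^T)\hookrightarrow\fA^T$ gives a completely isometric $\Psi\colon\A(X,\sigma)\to\fA^T$; letting $q$ be the image in the multiplier algebra of $p=\upchi_X$ (so $q=\upchi_{(p^T)^{-1}(X)}$, where $p^T\colon\widetilde{X^T}\to X^T$ is the canonical projection), we have $\Psi(\A(X,\sigma))=q\cdot\big(\text{image of }\A(X^T,\sigma^T)\big)\cdot q$, hence $\ca(\Psi(\A(X,\sigma)))\subseteq q\fA^T q$. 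It remains to show that $q$ is a full projection and that $q\fA^T q=\ca(\Psi(\A(X,\sigma)))=\cenv(\A(X,\sigma))$. Fullness: ideals of $\ca(\widetilde{X^T},\tau^T)$ correspond to $\tau^T$-invariant open subsets of $\widetilde{X^T}$, and the saturation of the clopen set $(p^T)^{-1}(X)$ is all of $\widetilde{X^T}$ --- indeed, from any infinite tail of $X^T$ whose base coordinate lies in $T$, finitely many applications of the covering maps $\widetilde{\sigma^T_i}$ climb the tail slices up to level $-1$ and then into $\overline U\subseteq X$. So $q\fA^T q$ is a full corner of $\cenv(\A(X^T,\sigma^T))$.

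\textbf{The two remaining inclusions, and the main obstacle.} To see $\ca(\Psi(\A(X,\sigma)))$ exhausts the corner: the Cuntz and covariance relations together with $\ft_i^*\ft_i=1$ show that $\ca(\Psi(\A(X,\sigma)))$ contains $\ft_a\iota(h)\ft_b^*$ for all $a,b\in\Fn$ and $h\in\rC_0(X)$, and then a Stone--Weierstrass argument exactly parallel to the one in the proof of Theorem~\ref{T:projective} --- noting that a tail of $X^T$ with base coordinate in $T$ has \emph{every} coordinate in $T$, so the functions $\upchi_{\widetilde{X^T}_w}(\iota(h)\circ p^T_{|w|})$ separate the points of $(p^T)^{-1}(X)$ --- shows these elements span $q\fA^T q$. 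To see $\ca(\Psi(\A(X,\sigma)))$ is no larger than $\cenv(\A(X,\sigma))$, one checks that $\Psi$ is a \emph{maximal} representation of $\A(X,\sigma)$: $\lambda_{X^T}=\bigoplus_{(\bi,\bx)}\lambda^T_{\bi,\bx}$ is a sum of infinite-tail representations, and each $\lambda^T_{\bi,\bx}\circ\Phi$ is already maximal, because the images of $\ft_1,\dots,\ft_n$ in the inductive-limit Fock space form Cuntz isometries and a coordinate-by-coordinate inspection (keeping track, in the inductive limit, of where $\ft_i^*$ sends the roots) yields the Cuntz--Pimsner covariance $\pi(f)=\sum_i\lambda^T_{\bi,\bx}(\ft_i)\,\pi(f\circ\sigma_i)\,\lambda^T_{\bi,\bx}(\ft_i)^*$ for every $f\in\rC_0(X)$ vanishing on $\overline U$, which is the condition characterizing maximality for tensor algebras. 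Hence $\lambda_{X^T}\circ\Phi$ is maximal, so $\ca(\Psi(\A(X,\sigma)))=\cenv(\A(X,\sigma))$ by Dritschel--McCullough, and combining everything yields $\cenv(\A(X,\sigma))=q\fA^T q$, a full corner of $\cenv(\A(X^T,\sigma^T))$. The main obstacle is this last paragraph: both the Stone--Weierstrass exhaustion of the corner and the Cuntz--Pimsner-covariance computation underlying maximality require careful bookkeeping of which basis vectors of the inductive-limit Fock spaces of $X^T$ lie over $X$ rather than over the tail, and of how the nesting maps interact with the Cuntz relation. (Alternatively, once one accepts the identifications of $\cenv(\A(X,\sigma))$ and $\cenv(\A(X^T,\sigma^T))$ with the Cuntz--Pimsner algebras of the associated $\ca$-correspondence and of its ``tail-added'' extension, part (ii) follows from the standard full-corner theorem for adding a tail to a $\ca$-correspondence.)
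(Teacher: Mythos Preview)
Part (i) is essentially the paper's argument and is correct; your treatment of well-definedness is in fact more careful than the paper's. For part (ii), the paper stays at the level of $X^T$ and compares the concrete maximal representations $\lambda_X$ and $\lambda_{X^T}$ directly, whereas you detour through Theorem~\ref{T:projective}, the covering space $\widetilde{X^T}$, and a Stone--Weierstrass argument; both routes aim at the same identification $\cenv(\A(X,\sigma))=\upchi_X\,\cenv(\A(X^T,\sigma^T))\,\upchi_X$.

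Your Stone--Weierstrass step has a genuine gap. You assert that the functions $\upchi_{\widetilde{X^T}_w}\cdot(\iota(h)\circ p^T_{|w|})$ separate the points of $(p^T)^{-1}(X)$, but this fails precisely on the fibre over $U$: if $x_0\in U$ then $x_0$ has no $\sigma$-preimage in $X$, so $x_1=(x_0,-1)\in T$ and inductively $x_k\in T$ for every $k\ge 1$; consequently $\iota(h)\circ p^T_{|w|}$ vanishes at all such points once $|w|\ge 1$, and the single remaining function $\iota(h)\circ p^T_0$ cannot detect $\bi$. Concretely, for $n\ge 2$ and $u\in U$, the element $\ft_1\,\upchi_{\{(u,-1)\}}\,\ft_2^*$ lies in the corner $\upchi_X\,\cenv(\A(X^T,\sigma^T))\,\upchi_X$ (because $\sigma^T_i((u,-1))=u\in X$) yet is not in $\ol{\spn}\{\ft_a\,\iota(h)\,\ft_b^*:h\in\rC_0(X)\}$: in the infinite-tail representation based at $\bx=(u,(u,-1),(u,-2),\dots)$ it sends $\xi^1_2$ to $\xi^1_1$, while each $\ft_a\,\iota(h)\,\ft_b^*$ either annihilates $\xi^1_2$ (when $|b|\ge 1$, since $\iota(h)$ is then evaluated at a point of $T$) or maps it to $\xi^1_{a2}\ne\xi^1_1$ (when $b=\varnothing$). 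The paper glides over the same point with ``thus it is evident''; the difficulty seems to stem from the fact that this tail construction carries $n$ parallel maps at each level rather than one. Your parenthetical alternative through the Muhly--Tomforde tail for $\ca$-correspondences is therefore the safer route to the full-corner conclusion.
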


\begin{proof}
Let $\ft_1,\dots,\ft_n$ denote the generators of $\A(X^T,\sigma^T)$.
Extend each $f \in C_0(X)$ to a function $f^T \in C_0(X^T)$ by setting it
to be $0$  on $T$. Then we can embed $\A(X,\sigma)$ into $\A(X^T,\sigma^T)$ by
\[ j(f) = f^T \qand j(\fs_i f) = \ft_i f^T .\]
Clearly this is an algebra homomorphism.
Note that if $X$ is compact, then $j(\fs_i) = j(\fs_i 1) = \ft_i 1^T = \ft_i \upchi_X$
where $\upchi_X$ is the characteristic function of $X$ in $\rC(X^T)$.
Indeed, since $X$ is invariant for $\sigma^T$, we have
\[ j(\fs_w f) = \ft_w f^T = \upchi_X \ft_w f^T \upchi_X .\]

To see that this embedding is completely isometric, it suffices to consider the two
full Fock representations which are the direct sum of all orbit representations.
As $X$ is invariant under the maps $\sigma_i^T$, it is evident that orbit
representations $\pi_x$ and $\pi_x^T$ for the two systems coincide for all
$x \in X$.

If $(u,k)$ belongs to the tail $T$, then consider the representation $\pi_{(u,k)}^Tj$,
the restriction of the orbit representation $\pi_{(u,k)}^T$ to $\A(X,\sigma)$.
We claim that this is unitarily equivalent to $0^{(\alpha)} \oplus \pi_u^{(\beta)}$
where $0$ is the 1-dim\-ens\-ional zero representation, $\alpha = \sum_{s=0}^{k-1} n^s$
and $\beta = n^k$.
Indeed, on the basis vectors $\xi_w$ for $|w|<k$, $f^T(\sigma_w(u,k)) = f^T(u,k-|w|) = 0$.
Hence $\pi_{(u,k)}^T(j(A)) \xi_w = 0$ for all $A \in \A(X,\sigma)$ and all $|w|<k$.
Observe that for each word $w$ with $|w|=k$, the restriction of $\pi_{(u,k)}^T$ to
$\spn\{ \xi_{vw} : v \in \Fn \}$ is unitarily equivalent to $\pi_{u}^T$.  Hence the claim follows.

Combining these two observations, one sees that $\Pi_{X^T} j$ is completely isometric to
$\Pi_X$, and indeed they are the direct sum of the same representations with
different non-zero multiplicities.

The last assertion of (i) will be established in (ii) below.

To prove (ii), we use Lemma~\ref{L:maximal} to note that the representations
$\lambda_X$ and $\lambda_{X^T}$ are completely isometric maximal representations
of $\A(X,\sigma)$ and $\A(X^T,\sigma^T)$ respectively.
Note that any infinite tail $(\bi,\bx)$ of $(X,\sigma)$ is also an infinite tail of  $(X^T,\sigma^T)$.
So the sum of all of these representations, $\lambda_{X,2}$, is a direct summand of
$\lambda_{X^T}|_{\A(X,\sigma)}$.
The other summands of $\lambda_X$ are the orbit representations $\lambda_u$ for $u \in U$.
In $(X^T,\sigma^T)$, these can be dilated to infinite tail representations by setting
$\bx = (u,(u,-1),(u,-2),\dots)$ and taking an arbitrary infinite sequence $\bi$.
It is easy to see that these two options exhaust all of the summands of $\lambda_{X^T}$.
These latter infinite tail representations restrict to $\A(X,\sigma)$ to yield the representation
$0^{(\infty)} \oplus \lambda_u^{(\infty)}$.
The argument is essentially the same as the analysis above of the representations $\pi_{(u,k)}^T$,
except that the multiplicities are now countably infinite.
It follows that $\cenv(\A(X,\sigma))$ is isomorphic to the C*-subalgebra of $\cenv(\A(X^T,\sigma^T))$
generated by the image of $\A(X,\sigma)$.

We can summarize what we've proved so far in the following commutative diagram:
\[
\xymatrix @C+5pc {
 \A(X^T,\sigma^T) \ar@{^{(}->}[r]^{\lambda_{X^T}} &  \cenv(\A(X^T,\sigma^T)) \\
 \A(X,\sigma) \ar@{^{(}->}[r]^{\lambda_{X}} \ar@{^{(}->}[u]^{j}& \ar@{^{(}-->}[u]_{\tilde j}
 \cenv(\A(X,\sigma))
}
\]

More concretely,
\[
 \cenv(\A(X^T,\sigma^T)) =
 \ol{ \spn \{  \ft_u f_{u,v} \ft_u^* : f_{u,v} \in C_0(X^T),\ u,v \in \Fn \}}
\]
and
\[
 \cenv(\A(X,\sigma)) =
 \ol{\spn \{ \upchi_X \ft_u f_{u,v}^T \ft_v^*\upchi_X : f_{u,v} \in C_0(X),\ u,v \in \Fn \}}.
\]
Thus it is evident that
\[ \cenv(\A(X,\sigma)) = \upchi_X  \cenv(\A(X^T,\sigma^T)) \upchi_X .\]
So this is a corner of $\cenv(\A(X^T,\sigma^T))$.

To see that this is a full corner, observe that if $|w|=k$, then
\[ (\ft_u f \ft_w^*) \upchi_X (\ft_w g \ft_v) = \ft_u (fg \upchi_{\tau^k(X)}) \ft_v .\]
Since the sets $\tau^k(X)$ are an increasing sequence of clopen sets with union $X^T$,
$\upchi_{\tau^k(X)}$ is an approximate unit for $\rC_0(X^T)$.
It follows that $\ft_u fg \ft_v$ lies in
$\cenv(\A(X^T,\sigma^T)) \upchi_X \cenv(\A(X^T,\sigma^T))$.
Thus it is a full corner.

Moreover we see that the map taking $A \in \A(X^T,\sigma^T)$ to $A \upchi_X$ is a completely
contractive idempotent projection onto $\A(X,\sigma)$ with complementary map sending
$A$ to $A \upchi_T$, which is also a complete contraction.
Thus $\A(X,\sigma)$ is  completely contractively complemented in $\A(X^T,\sigma^T)$.
\end{proof}

%%%%%%%%%%%%%%%%%%%%%%%%%%%%%%%%%%%%%%%%%%%
\section{Simplicity}

In this last section, we consider when the C*-envelope of $\A(X,\sigma)$ is simple.
We will soon restrict our attention to the compact case.

%%%%%%%%%%%%%%%%%%%%%%%%%%%
\begin{definition}
A subset $A \subset X$ is \textit{invariant} for $(X,\sigma)$ if $\sigma_i(A) \subset A$ for $1 \le i \le n$.
Say that $A$ is \textit{bi-invariant} if in addition, $\sigma_i^{-1}(A) \subset A$ for $1 \le i \le n$.

If $(X,\sigma)$ is a dynamical system, let the orbit of a point $x$ be
$\O^+(x) = \{ \sigma_w(x) : x \in \Fn \}$ and let the \textit{full orbit} of $x$ be
the smallest set $\O(x)$ containing $x$ which is \textit{bi-invariant}.

If $(X,\sigma)$ is a compact dynamical system,
we say $(X,\sigma)$ is \textit{minimal} if and only if every orbit is dense in $X$
or equivalently, there are no proper closed invariant sets.
\end{definition}

%%%%%%%%%%%%%%%%%%%%%%%%%%%
\begin{example}\label{E:no bi-invariant set}
Consider the space $X = \{ 0,1,2 \}$.  For $i=1,2$, define a map
\[
 \sigma_i(j) =
 \begin{cases}  i  &\qif j=i\\ 0  &\quad\text{otherwise}
 \end{cases}
\]
Then $\{0\}$ is a proper closed invariant set.
However, one can easily check that X has no proper bi-invariant sets.

Observe that $\tilde X$ consists of the points
\begin{gather*}
 (\one,\one), (\two,\two), (\bi,\zero) \qfor \bi\in\bn^\bN \qand \\
 \big((i_1\dots i_{k-1}2\one),0^k\one) \big),  \big((i_1\dots i_{k-1}1\two),0^k\two) \big)
 \qfor k \ge 1,
\end{gather*}
where $\zero$, $\one$ and $\two$ represent an infinite string of the digit 0,1 or 2
respectively, and $0^k$ represents a string of $k$ zeros.
Now
\[ p^{-1}(0) = \tilde X \setminus\{(\one,\one), (\two,\two)\} \]
is invariant, but not bi-invariant.  It contains the subset
$\{(\bi,\zero) : \bi\in\bn^\bN\}$ which is a proper closed bi-invariant set.

The difference in the two situations results from the fact that the inverse map
$\tau$ on $\tilde X$ takes a point to its \textit{unique} preimage under the
maps $\tilde\sigma_i$.  In $X$, the point $0$ has multiple preimages.
\end{example}

%%%%%%%%%%%%%%%%%%%%%%%%%%%
\begin{example}\label{E:noncompact}
Consider the space $X = \bN$ with the map $\sigma(n) = n+1$.
Then $X$ contains many proper closed invariant sets, but has no proper
bi-invariant set.  In this case, $X=\tilde X$.  So no improvement is obtained.
This system is not surjective.  Adding a tail yields the analogous system on
$\bZ$, which also has many invariant sets, but no proper bi-invariant sets.
The algebra $\cenv(\A(X,\sigma))$ is the compact operators, which is simple.
\end{example}

%%%%%%%%%%%%%%%%%%%%%%%%%%%
\begin{proposition} \label{P:minimal lift}
Let $(X,\sigma)$ be a compact dynamical system.
Minimality implies that $(X,\sigma)$ is surjective; and the following are equivalent:
\begin{enumerate}[  $($\em 1$)$]
\item $(X,\sigma)$ is minimal.
\item $(\tilde X,\tilde\sigma)$ is minimal.
\item $(\tilde X,\tilde\sigma)$ has no proper closed bi-invariant subset.
\end{enumerate}
\end{proposition}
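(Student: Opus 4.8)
\textbf{Proof strategy for Proposition~\ref{P:minimal lift}.}

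The plan is to establish the first assertion directly and then prove the cycle of implications $(1)\Rightarrow(2)\Rightarrow(3)\Rightarrow(1)$. For the first claim, suppose $(X,\sigma)$ is minimal but not surjective. Then $U=X\bsl\cup_i\sigma_i(X)$ is nonempty, and since the $\sigma_i$ are continuous and $X$ is compact, $\cup_i\sigma_i(X)$ is a closed proper subset which is clearly invariant. This contradicts minimality, so minimality forces surjectivity; this is why it makes sense to speak of $(\tilde X,\tilde\sigma)$ at all.

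For $(1)\Rightarrow(2)$: let $A\subseteq\tilde X$ be a nonempty closed invariant set; I want $A=\tilde X$. First, $p(A)$ is a nonempty closed (by properness of $p$ and compactness, Lemma~\ref{L:p proper}) subset of $X$, and it is invariant since $\sigma_i p=p\tilde\sigma_i$; by minimality $p(A)=X$. Now take an arbitrary point $(\bi,\bx)\in\tilde X$; I must produce it in $A$. Since $A$ is invariant under all the $\tilde\sigma_i$, and since $\tilde X$ is the disjoint union of the $\tilde X_w$ with $\tilde\sigma_w$ a homeomorphism onto $\tilde X_w$, the point $(\bi,\bx)$ lies in $\tilde X_w$ where $w=i_{k-1}\cdots i_1 i_0$ is the length-$k$ initial reversal, i.e. $(\bi,\bx)=\tilde\sigma_w(\bj,\by)$ for the shifted tail $(\bj,\by)$ with $j_s=i_{s+k}$, $y_s=x_{s+k}$. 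Because $p(A)=X$, pick $(\bi',\bx')\in A$ with $p((\bi',\bx'))=x_k$; then $\tilde\sigma_w((\bi',\bx'))\in A$ has the form $\big((i_{k-1}\cdots i_0\,\bi'),(\text{length-}k\text{ string ending at }x_k,\,x_k,x_{k+1}',\dots)\big)$ — so it agrees with $(\bi,\bx)$ in the first $k$ symbols of the $\bn^\bN$-coordinate and in $x_0,\dots,x_k$. The remaining step is to run this over all $k$ and extract $(\bi,\bx)$ as a limit: for each $k$ we obtain a point $a_k\in A$ agreeing with $(\bi,\bx)$ in the first $k$ coordinates of \emph{both} factors, hence $a_k\to(\bi,\bx)$ in the product topology, and $A$ closed gives $(\bi,\bx)\in A$. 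Thus $A=\tilde X$.

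For $(2)\Rightarrow(3)$ there is nothing to prove: a closed bi-invariant set is in particular a closed invariant set, so minimality of $(\tilde X,\tilde\sigma)$ rules out proper ones. For $(3)\Rightarrow(1)$: suppose $(X,\sigma)$ is \emph{not} minimal, so there is a proper nonempty closed invariant $B\subsetneq X$; I want a proper nonempty closed bi-invariant subset of $\tilde X$. The natural candidate is $C=\{(\bi,\bx)\in\tilde X : x_k\in B \text{ for all } k\ge0\}$, equivalently $\bigcap_{k\ge0}p_k^{-1}(B)$, which is closed. It is invariant: $\tilde\sigma_i$ prepends $\sigma_i(x_0)\in B$ (using $\sigma_i(B)\subseteq B$) and shifts, keeping all coordinates in $B$. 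It is bi-invariant because $\tau$ \emph{deletes} a coordinate (the key point flagged in Example~\ref{E:no bi-invariant set}): $\tau(\bi,\bx)=\big((i_1,i_2,\dots),(x_1,x_2,\dots)\big)$ has coordinates $x_1,x_2,\dots$, a sub-sequence of the original, so $\tau(C)\subseteq C$, and since $\tau$ is the two-sided inverse of the $\tilde\sigma_i$'s we get $\tilde\sigma_i^{-1}(C)\subseteq\tau(C)\subseteq C$. Finally $C$ is nonempty (lift any point of $B$ through $p$ using surjectivity of $\sigma$ restricted appropriately, or more carefully: since $B$ is invariant and nonempty we need $\cup_i\sigma_i(B)$ to hit $B$ — but $B$ being $\sigma$-invariant does not by itself give surjectivity onto $B$, so here one uses that $X$ is minimal... wait, it isn't). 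Let me instead argue nonemptiness of $C$ via the infinite-tail construction: pick any $x_0\in B$; I need $i_0,x_1\in B$ with $\sigma_{i_0}(x_1)=x_0$. This requires $x_0\in\cup_i\sigma_i(B)$, which is not automatic. The fix is to replace $B$ by a better invariant set: by Zorn's lemma choose $B$ \emph{minimal} among nonempty closed invariant sets; then $\cup_i\sigma_i(B)$ is a nonempty closed invariant subset of $B$, hence equals $B$, so $(B,\sigma|_B)$ is surjective and the recursive lift produces a point of $C$. Properness of $B\subsetneq X$ passes to properness of $C\subsetneq\tilde X$ since $p(C)\subseteq B\subsetneq X$ and $p$ is onto. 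This contradicts (3), completing the cycle.

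\textbf{Main obstacle.} The delicate point is the nonemptiness of the bi-invariant set $C$ in $(3)\Rightarrow(1)$: a proper closed \emph{invariant} $B$ need not satisfy $\cup_i\sigma_i(B)=B$, so one cannot directly lift points of $B$ to infinite tails staying in $B$. Passing to a Zorn-minimal closed invariant set (which exists by compactness) repairs this, since minimality of $B$ forces $\cup_i\sigma_i(B)=B$. One should double-check that the Zorn-minimal $B$ is still proper — it is, since it is contained in the original proper $B$. Everything else is a routine product-topology limit argument and bookkeeping with the disjoint clopen decomposition $\tilde X=\bigsqcup_{|w|=k}\tilde X_w$.
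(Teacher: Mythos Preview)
Your argument is correct, but it takes a somewhat different route from the paper's, and in one place it is more complicated than necessary.

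For $(1)\Rightarrow(2)$ you work with the ``every closed invariant set is full'' formulation of minimality, while the paper works with the dual ``every orbit is dense'' formulation: it fixes $(\bi,\bx)\in\tilde X$ and, given a basic neighbourhood $U$ of an arbitrary $(\bj,\by)$ determined by a depth $p$ and open sets $U_0,\dots,U_p$, uses minimality of $X$ to find $w$ with $\sigma_w(x_0)\in U_p$ and then checks $\tilde\sigma_{j_0\cdots j_{p-1}w}(\bi,\bx)\in U$. The underlying mechanism---build the approximation one coordinate at a time using density in $X$---is the same as yours. (A small slip: with the paper's convention $\sigma_w=\sigma_{i_1}\circ\cdots\circ\sigma_{i_k}$, the word you want is $w=i_0 i_1\cdots i_{k-1}$, not its reverse.)

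For the return direction the paper does not go $(3)\Rightarrow(1)$ directly; it proves $(2)\Rightarrow(1)$ via $p^{-1}(A)$ and, separately, $(3)\Rightarrow(2)$ by the following device: given any nonempty closed invariant $B\subset\tilde X$, set $B_0=B$ and $B_{k+1}=\bigcup_i\tilde\sigma_i(B_k)$; this is a decreasing chain of nonempty compacta whose intersection $B_\infty$ is bi-invariant. Applied to $B=p^{-1}(A)$ for a proper closed invariant $A\subset X$, one computes $B_k=p_k^{-1}(A)$ and hence $B_\infty=\bigcap_k p_k^{-1}(A)$---precisely your set $C$. Nonemptiness is then immediate from the finite intersection property, with no appeal to Zorn: each $p_k^{-1}(A)$ is nonempty (lift any $x\in A$ to some tail and apply any $\tilde\sigma_w$ with $|w|=k$) and the sequence is decreasing because $A$ is invariant. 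So your Zorn step, while correct, is avoidable; the paper's decreasing-intersection argument is both shorter and more modular, since it yields $(3)\Rightarrow(2)$ as a statement purely about $\tilde X$.
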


\begin{proof}
If $(X,\sigma)$ is not surjective, then $A = \bigcup_{i=1}^n \sigma_i(X)$
is a proper closed invariant subset.  So minimality implies surjectivity.

If $A\subset X$ is a proper closed invariant set,
then $\tilde A = p^{-1}(A)$ is a proper closed invariant subset
of $(\tilde X,\tilde\sigma)$.  So (2) implies (1).

Suppose that $B \subset \tilde X$ is closed and invariant.
Define a sequence of subsets
\[ B_0=B \AND B_{k+1} = \bigcup_{i=1}^n \tilde\sigma_i(B_k) \qfor k \ge 0 .\]
Then this is a decreasing sequence of non-empty compact invariant sets.
Hence $B_\infty = \bigcap_{k\ge0} B_k$ is a non-empty closed invariant set.

We claim that $B_\infty$ is bi-invariant.
Indeed, if $x\in B_\infty$, then $x = \tilde\sigma_i(y)$
for a unique choice of $i$ and $y$, namely $y=\tau(x)$ and $i$ is
determined by membership in $\tilde X_i$, which are disjoint sets.
Since $x \in B_{k+1}$, it follows that $y\in B_k$.
This holds for all $k\ge0$, and thus $y \in B_\infty$.
So (3) implies (2).

Clearly, (2) implies (3).
Finally suppose that $(X,\sigma)$ is minimal, and fix a point $(\bi,\bx) \in \tilde X$.
We will show that the orbit $\O^+((\bi,\bx))$ is dense in $\tilde X$.
To this end, suppose that $(\bj,\by)$ is an arbitrary point in $\tilde X$.
A basic open neighbourhood of $(\bj,\by)$ is given by an integer $p$
and open sets $U_k$ of $y_k$ for $0 \le k \le p$:
\[  U = \{(\bi,\bz) : i_k = j_k \FOR 0 \le k < p \AND z_k \in U_k \FOR 0 \le k \le p \}  . \]
Moreover, we may suppose that $\sigma_{j_k}(U_{k+1}) \subset U_k$ by
replacing $U_1$ by a smaller open neighbourhood of $y_1$ which is mapped
by $\sigma_{j_0}$ into $U_0$ by using the continuity of $\sigma_{j_0}$.
Then replace $U_2$ by a smaller open set, etc.

Since $X$ is minimal, $\O^+(x_0)$ is dense in $X$.
Select a word $w$ so that $\sigma_w(x_0) = z_p \in U_p$.
Define $z_k = \sigma_{j_k}(z_{k+1})$ for $k=p-1,\dots,0$.
Consider the point
\[
 \tilde\sigma_{j_0j_1\dots j_{p-1}w}((\bi,\bx))  =
\big( (j_0j_1\dots j_{p-1}w\bi), (z_0,z_1,\dots,z_p,\dots) \big).
\]
This evidently belongs to $U$.  Hence $\O^+((\bi,\bx))$ is dense in $\tilde X$.
So $(\tilde X,\tilde\sigma)$ is minimal.
\end{proof}

Recall that in the surjective case, we have expressed $\cenv(\A(X,\sigma))$ as the
crossed product of a C*-algebra $\fB$ by an endomorphism, $\fB \times_\alpha \bN$, where
$\alpha(b) = \frac1 n \sum_{i,j=1}^n \ft_i b \ft_j^*$ and $\fB$ is the span of all elements
$\ft_u f \ft_v^*$ for $|u|=|v|$ and $f \in \rC_0(\tilde X)$.
An ideal of $\fB$ will intersect $\rC_0(\tilde X)$ in an ideal, and so will have the
form $I_F = \{ f \in \rC_0(\tilde X) : f|F = 0 \}$, where $F$ is a closed subset of $\tilde X$.

\begin{definition}
Call a subset $F$ of $(\tilde X,\tilde\sigma)$ \textit{robust} if
$F$ contains $\tilde\sigma_w \tau^{|w|}(x)$
for every $x \in F$ and $w \in \bF_n$.
\end{definition}

The robust closed subsets are not necessarily bi-invariant, but they they do have
the property that if $|v|=|w|$ and $y\in \tilde X$ such that $\tilde\sigma_v(y) \in F$,
then $\tilde\sigma_w(y) \in F$ as well. This is the key concept in the following result.

%%%%%%%%%%%%%%%%%%%%%%%%%%%
\begin{lemma} \label{L:alpha invariant}
Let $(X,\sigma)$ be a surjective, multivariable dynamical system.
Then the $\alpha$-invariant ideals of $\fB$ are in bijective correspondence with
the closed robust $\tau$-invariant subsets of $\tilde X$ via the map taking $\fJ$ to
$F$, where $\fJ \cap \rC_0(\tilde X) = I_F$.
\end{lemma}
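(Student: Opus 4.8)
The plan is to establish a bijection between $\alpha$-invariant ideals $\fJ$ of $\fB$ and closed robust $\tau$-invariant subsets $F \subseteq \tilde X$. I would proceed in two directions. First, given a closed robust $\tau$-invariant set $F$, define $\fJ_F = \ol{\spn}\{ \ft_u f \ft_v^* : |u|=|v|, \, f \in I_F \}$ and check this is an ideal of $\fB$ that is $\alpha$-invariant and satisfies $\fJ_F \cap \rC_0(\tilde X) = I_F$. Conversely, given an $\alpha$-invariant ideal $\fJ$, set $F$ to be the closed set with $\fJ \cap \rC_0(\tilde X) = I_F$, and show $F$ is robust and $\tau$-invariant, and that $\fJ = \fJ_F$. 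The key technical tool is that $\fB = \ol{\bigcup_k \fB_k}$ with each $\fB_k \cong \fM_{n^k}(\rC_0(\tilde X))$; ideals of such homogeneous C*-algebras correspond exactly to ideals of the center, i.e. to closed subsets of $\tilde X$, and an ideal of $\fB$ is the closure of the union of its intersections with the $\fB_k$.

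For the correspondence at level $k$, I would note that $\fJ \cap \fB_k$, under the isomorphism $\fB_k \cong \fM_{n^k}(\rC_0(\tilde X))$, is $\fM_{n^k}(I_{F_k})$ for some closed set $F_k \subseteq \tilde X$. The compatibility of these across $k$ is governed by the (non-standard) embedding $\fB_k \hookrightarrow \fB_{k+1}$ described in the excerpt, namely $\ft_u f \ft_v^* = \sum_i \ft_{ui}(f\circ\tilde\sigma_i)\ft_{vi}^*$, together with the embedding of $\rC_0(\tilde X) = \fB_0$ into $\fB_k$ as $f \mapsto \diag(f \circ \tilde\sigma_w)_{|w|=k}$. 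Tracing an element of $I_{F_0}$ through this diagonal embedding and asking when it lands in the ideal at level $k$ shows that $F_k = \bigcup_{|w|=k} \tilde\sigma_w \tau^k(F_0)$ — more precisely, membership of a point $x$ in $F_k$ is equivalent to $\tilde\sigma_w \tau^k(x) \in F_0$ for the unique $w$ with $x \in \tilde X_w$; but robustness is exactly what forces all level sets $F_k$ to be determined by $F_0 = F$, and $\alpha$-invariance of $\fJ$ (since $\alpha(b) = \frac1n \sum_{i,j}\ft_i b \ft_j^*$ shifts $\fB_k$ into $\fB_{k+1}$) is precisely the statement that the $F_k$ satisfy this robustness compatibility, hence that $F$ is robust.

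The $\tau$-invariance of $F$ comes out of the requirement that $\fJ$ be a two-sided ideal of $\fB$, not merely of each $\fB_k$: multiplying $\ft_u f \ft_v^* \in \fJ \cap \fB_k$ on the right by $\ft_v g \ft_v^* \in \fB_{k+1} \setminus \fB_k$... actually, more cleanly, one uses that for $f \in I_F$, the elements $\ft_i f \ft_i^* = \upchi_i (f \circ \tau)$ (Lemma~\ref{L:calculation}) lie in $\fJ$, and $\upchi_i(f\circ\tau)$ vanishes on $F$ forces $f \circ \tau$ to vanish on $F \cap \tilde X_i$, i.e. $f$ vanishes on $\tau(F \cap \tilde X_i)$; since this holds for all $f \in I_F$, we get $\tau(F) \subseteq F$. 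Conversely, to see $\fJ_F$ is genuinely an ideal one checks closure under left and right multiplication by the spanning elements $\ft_a g \ft_b^*$ of $\fB$, which reduces via the covariance and Cuntz relations to the $\tau$-invariance and robustness of $F$.

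The main obstacle I expect is keeping the bookkeeping of the two embeddings straight — the diagonal embedding $\rC_0(\tilde X) \hookrightarrow \fB_k$ via $f \mapsto \diag(f\circ\tilde\sigma_w)$ is \emph{not} the standard unital one, and the description of $F_k$ in terms of $F$ involves the local homeomorphism $\tau$ composed with the (partial) inverses $\tilde\sigma_w$, so one must carefully verify that robustness is the exact condition making $F \mapsto (F_k)_k$ well-defined and that $\alpha$-invariance translates to it. Once the level-$k$ picture and the two compatibility conditions (robustness $\leftrightarrow$ $\alpha$-invariance, $\tau$-invariance $\leftrightarrow$ ideal in $\fB$) are pinned down, the bijection and the fact that $\fJ$ is recovered as $\ol{\bigcup_k (\fJ \cap \fB_k)} = \fJ_F$ follow routinely.
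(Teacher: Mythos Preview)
Your overall strategy---filter $\fB = \ol{\bigcup_k \fB_k}$, identify each $\fJ_k := \fJ \cap \fB_k$ with $\fM_{n^k}(I_{F_k})$ for a closed set $F_k$, then read off conditions on $F_0$---is exactly the paper's approach. But you have the two key deductions swapped, and this is a genuine gap, not just bookkeeping.

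The compatibility of the $\fJ_k$'s under the embeddings $\fB_k \hookrightarrow \fB_{k+1}$ (which is purely the statement that $\fJ$ is an ideal of the inductive limit, no $\alpha$ involved) yields the relations $F_{k+1} = \tau(F_k)$ and $F_k = \bigcup_i \tilde\sigma_i(F_{k+1})$, hence $F_k = \tau^k(F_0)$ and $F_0 = \bigcup_{|w|=k}\tilde\sigma_w(\tau^k(F_0))$. That last identity \emph{is} robustness of $F_0$. So robustness is forced by the ideal property, not by $\alpha$-invariance. (Your formula $F_k = \bigcup_{|w|=k}\tilde\sigma_w\tau^k(F_0)$ conflates these two relations: the right-hand side equals $F_0$, not $F_k$, once $F_0$ is robust.)

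Conversely, $\alpha$-invariance is what gives $\tau$-invariance: for $f \in I_{F_k}$ and $|u|=|v|=k$, the condition $\alpha(\ft_u f \ft_v^*) \in \fJ_{k+1}$ reads off in $\fM_{n^{k+1}}(\rC_0(\tilde X))$ as $f \in I_{F_{k+1}}$, so $F_{k+1} \subset F_k$, i.e.\ $\tau(F_0) \subset F_0$. Your proposed derivation of $\tau$-invariance from the ideal property fails precisely at the step ``$\ft_i f \ft_i^* \in \fJ$ for $f \in I_F$'': since $\ft_i \notin \fB$, this is not a consequence of $\fJ$ being a two-sided ideal of $\fB$. Concretely, as an element of $\fB_1 \cong \fM_n(\rC_0(\tilde X))$ the operator $\ft_i f \ft_i^*$ is the matrix with $f$ in position $(i,i)$ and zeros elsewhere, so it lies in $\fJ_1$ iff $f \in I_{F_1}$; knowing only $f \in I_{F_0}$, you need $F_1 \subset F_0$ to conclude---and that containment is exactly what $\alpha$-invariance supplies. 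Once you reverse the two attributions, the rest of your outline goes through and matches the paper.
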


\begin{proof}
Let $\fJ$ be an $\alpha$-invariant ideal  of  $\fB$.
Now $\fB$ is the inductive limit of subalgebras $\fB_k \simeq \fM_{n^k}(\rC_0(\tilde X))$.
Hence $\fJ$ is the closed union of the ideals $\fJ_k := \fJ \cap \fB_k$ for $k\ge 0$.
These ideals have the form $\fJ_k \simeq \fM_{n^k}(I_{F_k})$ for some closed
subsets $F_k$ of $\tilde X$.

The injection of $\fJ_k$ into $\fJ_{k+1}$ sends $\ft_u f \ft_v$, where $|u|=|v|=k$, to
$\sum_{i=1}^n \ft_{ui}(f\circ\tilde\sigma_i)\ft_{vi}$.
This shows that if $f\in I_{F_k}$, then $f\circ \tilde\sigma_i$ belong to $I_{F_{k+1}}$.
This implies that
\[
 \bigcup_{i=1}^n \tilde\sigma_i(F_{k+1}) \subset F_k.
\]
In particular, this implies that $F_{k+1} \subset \tau(F_k)$.

On the other hand, if $f \in I_{F_{k+1}}$ and $|u|=|v|=k$,
then $\fJ_{k+1}$ contains $\ft_u(\ft_i f \ft_i^*) \ft_v^* = \ft_u \upchi_i(f\circ\tau) \ft_v^*$
by Lemma~\ref{L:calculation}.
Hence $\upchi_i(f\circ\tau)$ belongs to $I_{F_k}$, and thus vanishes on $F_k$.
This implies that
\[ \tau(F_k) = \bigcup_{i=1}^n \sigma_i^{-1}(F_k) \subset F_{k+1} .\]
Together these relations show that
\[ F_{k+1} = \tau(F_k) \qand F_k = \bigcup_{i=1}^n \tilde\sigma_i(F_{k+1}) .\]
Therefore $F_k = \tau^k(F_0)$ and $F_0 = \bigcup_{|w|=k} \tilde\sigma_w(F_k)$.
Hence $F_0$ is robust.

Since $\alpha(\fJ) \subset \fJ$, we see that if $f\in I_{F_k}$ and $|u|=|v|=k$, then
\[ \alpha( \ft_u f \ft_v^*) = \frac1n \sum_{i,j=1}^n \ft_i \ft_u f \ft_v^* \ft_j^* \in \fJ_{k+1} .\]
Thus $f \in I_{F_{k+1}}$.
Therefore $I_{F_k} \subset  I_{F_{k+1}}$, whence $F_{k+1}\subset F_k$.
It follows that $\tau(F_k) \subset F_k$.
So each $F_k$ is $\tau$ invariant.
In particular, $F_0$ is robust and $\tau$-invariant.

Conversely, suppose that  $F_0$ is robust and $\tau$-invariant.
Define $F_k = \tau^k(F_0)$ for $k \ge 1$.
It follows from the robustness of $F_0$ that each $F_k$ is
also robust.  In particular,  $F_k = \bigcup_{i=1}^n \tilde\sigma_i(F_{k+1})$.
Let
\[
 \fJ_k = \spn\{ \ft_u f \ft_v^* : |u|=|v|=k \AND f \in I_{F_k} \}
 \qand \fJ = \ol{ \bigcup_{k\ge0} \fJ_k} .
\]
Reversing the arguments above shows that $\fJ_k \subset \fJ_{k+1}$ and
$\alpha(\fJ_k) \subset \fJ_{k+1}$ for all $k \ge 0$.
It follows that the union $\fJ$ is an $\alpha$-invariant ideal of $\fB$.

An element $f\in \fJ_k \cap \rC_0(\tilde X)$ is represented in $\fB_k$ as
$\sum_{|u|=k} \ft_u (f\circ \tilde\sigma_u) \ft_u^*$.
This requires $f\circ \tilde\sigma_u \in I_{F_k}$ for all $|u|=k$.
Arguing as above, this means that $f\in I_{F_0}$.
Therefore $\fJ_k \cap \rC_0(\tilde X) = I_{F_0}$ for all $k\ge0$,
and thus the same holds for $\fJ$.
This shows that the map taking $\fJ$ to $F_0$ is a surjection onto the
collection of $\tau$-invariant, closed robust sets.
Also the details of the structure show that $F_0$ determines the
sets $F_k$ and hence the ideals $\fJ_k$.  So this map is injective.
\end{proof}

%%%%%%%%%%%%%%%%%%%%%%%%%%%
\begin{remark}\label{R:alpha invariant}
There are $\alpha$-invariant ideals determined by non-constant sequences of sets.
An easy example starts with $X = [0,1] \times 2^\bN$ and two maps
$\sigma_i(x,\bi) = (x^2,i\bi)$.  Since $\sigma_1$ and $\sigma_2$ are injective
maps with complementary ranges, one sees that $\tilde X = X$.
Consider $F_0 = [r,1]\times 2^\bN$ for any $0 < r < 1$.  Then
$F_k = [r^{2^{-k}},1]\times 2^\bN$ satisfy the relations, and therefore
determine a proper $\alpha$-invariant ideal of $\fB$.
The only proper closed bi-invariant sets are $\{0\}\times 2^\bN$ and $\{1\}\times 2^\bN$.
\end{remark}

We will be interested in $\alpha$-invariant ideals of $\fB$ which are obtained
from ideals of $\fB \times_\alpha \bN$ by intersection with $\fB$.
This puts an additional constraint on $\fJ$, namely that $\ft_i^* \fJ \ft_j \subset \fJ$.
Reasoning as above, one sees that if $f \in I_{F_{k+1}}$, then $f \in I_{F_k}$.
So we deduce that $F_k=F_0$ for all $k$, and $F$ is a bi-invariant set.
So the ideals associated to bi-invariant sets play a more important role for us.
An apparently weaker but more intrinsic condition leads to the same result.

%%%%%%%%%%%%%%%%%%%%%%%%%%%
\begin{definition}
An ideal $\fJ$ of $\fB$ is $\alpha$-bi-invariant if $\alpha(\fJ) \subset \fJ$
and whenever $\alpha(b) \in \fJ$, then $b \in \fJ$.
\end{definition}

%%%%%%%%%%%%%%%%%%%%%%%%%%%
\begin{corollary} \label{C:alpha bi-invariant}
Let $(X,\sigma)$ be a surjective multivariable dynamical system.
Then the $\alpha$-bi-invariant ideals of $\fB$ are in bijective
correspondence with the closed $\tilde\sigma$-bi-invariant subsets of $\tilde X$
via the map sending $\fJ$ to $\fJ \cap \rC_0(\tilde X) = I_F$.
\end{corollary}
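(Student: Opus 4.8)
The plan is to obtain this as a refinement of Lemma~\ref{L:alpha invariant}: an $\alpha$-bi-invariant ideal is in particular $\alpha$-invariant, so it already corresponds to a closed robust $\tau$-invariant set $F_0$, with $F_k=\tau^k(F_0)$, $\fJ_k:=\fJ\cap\fB_k\simeq\fM_{n^k}(I_{F_k})$, and $F_k=\bigcup_i\tilde\sigma_i(F_{k+1})$, exactly as in the proof there. I would show that the extra clause ``$\alpha(b)\in\fJ\Rightarrow b\in\fJ$'' is precisely what forces the sequence $(F_k)$ to be constant, equal to a bi-invariant set, and conversely that bi-invariance of $F$ gives such an ideal.

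For the forward direction, fix $k\ge0$, $|u|=|v|=k$, and $f\in I_{F_{k+1}}$. Then $\alpha(\ft_u f\ft_v^*)=\frac1n\sum_{i,j}\ft_{iu}f\ft_{jv}^*$, and under $\fB_{k+1}\simeq\fM_{n^{k+1}}(\rC_0(\tilde X))$ every matrix entry of this element is $0$ or $\frac1n f$, hence lies in $I_{F_{k+1}}$, so $\alpha(\ft_u f\ft_v^*)\in\fJ_{k+1}\subset\fJ$. The $\alpha$-bi-invariance then gives $\ft_u f\ft_v^*\in\fJ\cap\fB_k=\fJ_k$, i.e. $f\in I_{F_k}$; thus $F_k\subset F_{k+1}$. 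Since $\tau$-invariance already gave $F_{k+1}=\tau(F_k)\subset F_k$, the two inclusions force $F_k=F_{k+1}$ for all $k$. Writing $F:=F_0$, the relation $\tau(F)=F_1=F$, together with $\tau(F)=\bigcup_i\tilde\sigma_i^{-1}(F)$, yields $\tilde\sigma_i^{-1}(F)\subset F$; and $F=F_0=\bigcup_i\tilde\sigma_i(F_1)=\bigcup_i\tilde\sigma_i(F)$ yields $\tilde\sigma_i(F)\subset F$. So $F$ is closed and $\tilde\sigma$-bi-invariant, and $\fJ\cap\rC_0(\tilde X)=I_F$.

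For the converse, given a closed $\tilde\sigma$-bi-invariant set $F$ I would first check that $F$ is robust and $\tau$-invariant with $\tau(F)=F$: for $x\in F$, bi-invariance gives $\tau^{|w|}(x)\in F$ and then $\tilde\sigma_w\tau^{|w|}(x)\in\tilde\sigma_w(F)\subset F$. Hence Lemma~\ref{L:alpha invariant} applied with $F_k=F$ for all $k$ produces an $\alpha$-invariant ideal $\fJ=\ol{\bigcup_k\fJ_k}$, $\fJ_k=\spn\{\ft_u f\ft_v^*:|u|=|v|=k,\ f\in I_F\}$, with $\fJ\cap\rC_0(\tilde X)=I_F$. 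To see $\fJ$ is $\alpha$-bi-invariant, I would use that $V=\frac1{\sqrt n}\sum_i\ft_i$ is an isometry, so $V^*V=1$ and $b=V^*\alpha(b)V$ for every $b\in\fB$; it therefore suffices to show $V^*\fJ V\subset\fJ$. A short computation gives, for $|u|=|v|=k+1$ with $u=u_1\hat u$ and $v=v_1\hat v$, that $V^*\ft_u f\ft_v^* V=\frac1n\ft_{\hat u}f\ft_{\hat v}^*\in\fJ_k$, and $V^* f V=\frac1n\sum_i f\circ\tilde\sigma_i\in I_F$ since $\tilde\sigma_i(F)\subset F$; so $V^*\fJ V\subset\fJ$, and consequently $\alpha(b)\in\fJ$ implies $b=V^*\alpha(b)V\in\fJ$. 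Finally $\fJ\mapsto F$ is onto by this construction and injective as the restriction of the injection in Lemma~\ref{L:alpha invariant}.

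The genuinely delicate point is not any one calculation but handling the quantifier in ``$\alpha(b)\in\fJ\Rightarrow b\in\fJ$'': in the forward direction it is enough to test it on the generators $\ft_u f\ft_v^*$, whereas in the converse it must be verified for every $b\in\fB$, and the clean way to get that uniformly is the identity $b=V^*\alpha(b)V$ afforded by $V^*V=1$ together with the invariance $V^*\fJ V\subset\fJ$.
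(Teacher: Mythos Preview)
Your proof is correct and follows essentially the same route as the paper. In the forward direction you show $F_k=F_{k+1}$ step by step while the paper jumps from $I_{F_k}$ to $I_{F_0}$ via $\alpha^k(f)\in\fJ_k$, but the content is identical; in the converse both arguments rest on $\ft_i^*\fJ_{k+1}\ft_j\subset\fJ_k$, and your explicit use of $b=V^*\alpha(b)V$ together with $V^*\fJ V\subset\fJ$ makes the passage from generators to arbitrary $b\in\fB$ cleaner than the paper's more elliptical ``in particular, if $\alpha(b)\in\fJ_{k+1}$, then $b\in\fJ_k$''.
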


\begin{proof}
In particular, $\fJ$ is $\tau$-invariant.
So we adopt the notation of the previous proof to describe $\fJ$.
Suppose that $f \in I_{F_k}$.
Then $\tau^k(f) = n^{-k} \sum_{|u|=|v|=k} \ft_u f \ft_v^*$ belongs to $\fJ_k$.
By the bi-invariance, we have $f\in \fJ$.
Therefore $I_{F_0}=I_{F_k}$.  Thus
\[ F_0 = \tau(F_0) = \bigcup_{i=1}^n \tilde\sigma_i(F_0) .\]
In other words, $F_0$ is bi-invariant.

Conversely, if $F_0$ is bi-invariant, then following the construction of
Lemma~\ref{L:alpha invariant}, we have $F_k=F_0$ and
and $F_k = \bigcup_{i=1}^n \tilde\sigma_i(F_{k+1})$ for all $k\ge 0$.
It is easy to see that if $b\in \fJ_{k+1}$, then $\ft_i^* b \ft_j$ belongs to $\fJ_k$
for all $i,j$.  In particular, if $\alpha(b) \in \fJ_{k+1}$, then $b \in \fJ_k$.
So $\fJ$ is $\alpha$-bi-invariant.
\end{proof}

These results are more transparent in the unital case.
We have the following variant on Proposition~\ref{P:minimal lift}.

%%%%%%%%%%%%%%%%%%%%%%%%%%%
\begin{corollary} \label{C:alpha invariant}
Let $(X,\sigma)$ be a surjective, compact, multivariable dynamical system
with $n\ge2$.
Then the following are equivalent:
\begin{enumerate}[  $($\em 1$)$]
\item $X$ is minimal.
\item $\fB$ has no proper $\alpha$-invariant ideals.
\item $\fB$ has no proper $\alpha$-bi-invariant ideals.
\end{enumerate}
\end{corollary}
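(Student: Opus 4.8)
The plan is to prove the chain (1) $\Rightarrow$ (2) $\Rightarrow$ (3) $\Rightarrow$ (1), using the bijective correspondences established in Lemma~\ref{L:alpha invariant} and Corollary~\ref{C:alpha bi-invariant} together with Proposition~\ref{P:minimal lift}. The point is that these two results translate the ideal-theoretic conditions on $\fB$ into purely dynamical conditions on $(\tilde X,\tilde\sigma)$, at which point Proposition~\ref{P:minimal lift} does the rest.

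First I would prove (1) $\Rightarrow$ (2). Assume $X$ is minimal. By Lemma~\ref{L:alpha invariant}, a proper $\alpha$-invariant ideal $\fJ$ of $\fB$ corresponds to a proper closed, robust, $\tau$-invariant subset $F_0 \subset \tilde X$. From the structure in that proof, $F_0$ is robust and $\tau$-invariant, so the sets $F_k = \tau^k(F_0)$ form a decreasing sequence and $F_0 = \bigcup_{|w|=k}\tilde\sigma_w(F_k)$ for all $k$; in particular, since $\tau(F_0)\subseteq F_0$, the intersection $B = \bigcap_{k\ge 0} F_k$ is a nonempty closed set, and the argument in Proposition~\ref{P:minimal lift} (the ``(3) implies (2)'' step, using that $\tilde\sigma_i$ have disjoint ranges so each point has a unique $\tilde\sigma$-preimage) shows $B$ is bi-invariant. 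But by Proposition~\ref{P:minimal lift}, minimality of $X$ forces $(\tilde X,\tilde\sigma)$ to have no proper closed bi-invariant subset, so $B = \tilde X$; since $B \subseteq F_0 \subseteq \tilde X$, we get $F_0 = \tilde X$, i.e. $\fJ = 0$. Hence $\fB$ has no proper $\alpha$-invariant ideal.

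Next, (2) $\Rightarrow$ (3) is immediate: an $\alpha$-bi-invariant ideal is in particular $\alpha$-invariant, so if there are no proper ideals of the former kind when there are none of the latter. Finally, for (3) $\Rightarrow$ (1), I would argue contrapositively. If $X$ is not minimal, then by Proposition~\ref{P:minimal lift}, $(\tilde X,\tilde\sigma)$ has a proper closed bi-invariant subset $F$ (conditions (1), (2), (3) there are equivalent, and $\tilde X$ is compact since $X$ is). By Corollary~\ref{C:alpha bi-invariant}, this $F$ corresponds to a proper $\alpha$-bi-invariant ideal $\fJ$ of $\fB$ with $\fJ \cap \rC_0(\tilde X) = I_F \ne 0$ and $I_F \ne \rC_0(\tilde X)$, so $\fJ$ is proper and nonzero. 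This contradicts (3), completing the cycle. I expect the only genuine subtlety is bookkeeping: making sure the ``robust $\tau$-invariant'' sets arising in (1) $\Rightarrow$ (2) really do collapse to bi-invariant ones, which is exactly where the disjointness of the ranges $\tilde X_i$ (and hence uniqueness of $\tilde\sigma$-preimages) is used — this is the feature of the covering system $(\tilde X,\tilde\sigma)$ that fails for $(X,\sigma)$ itself, as Example~\ref{E:no bi-invariant set} illustrates. Everything else is a direct invocation of the already-proven correspondences.
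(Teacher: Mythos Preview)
Your proof is correct and follows essentially the same route as the paper: both use Lemma~\ref{L:alpha invariant} and Corollary~\ref{C:alpha bi-invariant} to translate the ideal conditions into dynamical ones on $\tilde X$, invoke Proposition~\ref{P:minimal lift} for the minimality equivalence, and handle the one nontrivial implication by passing from a robust $\tau$-invariant $F_0$ to the bi-invariant set $\bigcap_{k\ge0}\tau^k(F_0)$. The only difference is organizational---you run a cycle $(1)\Rightarrow(2)\Rightarrow(3)\Rightarrow(1)$ while the paper first proves $(1)\Leftrightarrow(3)$ and then $(2)\Leftrightarrow(3)$---and your pointer to the ``(3) implies (2)'' step of Proposition~\ref{P:minimal lift} is slightly imprecise (the bi-invariance of $\bigcap\tau^k(F_0)$ really comes from the robustness relation $F_k=\bigcup_i\tilde\sigma_i(F_{k+1})$ established in Lemma~\ref{L:alpha invariant}, together with $\tau$-invariance of each $F_k$), but the substance is the same.
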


\begin{proof}
By Proposition~\ref{P:minimal lift}, minimality of $X$ is equivalent to having
no proper closed bi-invariant subsets in $(\tilde X,\tilde\sigma)$.
So by Corollary~\ref{C:alpha bi-invariant}, this is equivalent to having
no proper $\alpha$-bi-invariant ideals in $\fB$.
So (1) and (3) are equivalent.

Clearly, (3) implies (2).  However, by Lemma~\ref{L:alpha invariant}, (2) implies
the existence of a proper robust $\tau$-invariant subset $F$.
It follows that $\bigcap_{k\ge0} \tau^k(F)$ is a proper closed bi-invariant set.
\end{proof}

We can now prove the main result of this section.

%%%%%%%%%%%%%%%%%%%%%%%%%%%
\begin{theorem}
Let $(X,\sigma)$ be a compact multivariable dynamical system ($n \geq 2$). Then
$\cenv(\mathcal{A}(X,\sigma))$ is simple if and only if $(X,\sigma)$ is minimal.
\end{theorem}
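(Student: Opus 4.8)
The plan is to reduce the compact non-surjective case to the surjective one via the "adding a tail" construction, and then invoke a crossed-product simplicity criterion. First I would dispose of the non-surjective case: if $(X,\sigma)$ is not surjective, then $A=\bigcup_{i=1}^n\sigma_i(X)$ is a proper closed invariant set, so $(X,\sigma)$ is not minimal, and on the other side Theorem~\ref{T:surjective c.isom.}(ii) exhibits $\cenv(\A(X,\sigma))$ as a full corner of $\cenv(\A(X^T,\sigma^T))$; since $X^T$ is not compact (it has the infinite tail $T$ attached), and a full corner of a simple C*-algebra is simple while a full corner of a non-simple C*-algebra is non-simple, it suffices to observe that $\cenv(\A(X^T,\sigma^T))$ is itself non-simple --- indeed $X$ is a proper closed $\sigma^T$-invariant set in $X^T$, and the argument below (applied now to the surjective system $(X^T,\sigma^T)$, using Lemma~\ref{L:alpha invariant}) produces a proper ideal. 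Thus the theorem for non-surjective compact $(X,\sigma)$ follows once we have it in the surjective case, with both sides of the "iff" false.

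So assume $(X,\sigma)$ is surjective and compact. By Theorem~\ref{T:endo}, $\cenv(\A(X,\sigma))\simeq \fB\times_\alpha\bN$, where $\fB$ is unital (since $\tilde X$ is compact by Lemma~\ref{L:p proper}) and $\alpha$ is the unital injective endomorphism $\alpha(b)=VbV^*$. I would now quote Paschke's theorem (\cite{Pa}): for a unital C*-algebra $\fB$ with a unital injective endomorphism $\alpha$ whose range $\alpha(\fB)=V\fB V^*$ has a complementary family making $\fB\times_\alpha\bN$ "essential" --- concretely, under the hypothesis that $V$ is a proper isometry with $V^*V=1$, $VV^*<1$ and $\bigcap_{k}\alpha^k(\fB)$ behaves well --- the crossed product $\fB\times_\alpha\bN$ is simple if and only if $\fB$ has no proper $\alpha$-invariant ideals. (Schweizer's result \cite{Sch} for Cuntz--Pimsner algebras gives the same conclusion.) Granting this, simplicity of $\cenv(\A(X,\sigma))$ is equivalent to $\fB$ having no proper $\alpha$-invariant ideals, which by Corollary~\ref{C:alpha invariant} is equivalent to $X$ being minimal. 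Chaining these equivalences finishes the surjective case.

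The main obstacle is pinning down exactly which version of Paschke's simplicity criterion applies and verifying its hypotheses for our $(\fB,\alpha)$: one must check that the endomorphism $\alpha$ is injective (clear, since $V$ is an isometry so $V^*\alpha(b)V=b$), that $\fB\cap (\fB\times_\alpha\bN)\,V\,(\fB\times_\alpha\bN)$ generates enough of the algebra, and --- the genuinely delicate point --- that the relevant "nondegeneracy" hypothesis holds, namely that the only ideals of $\fB\times_\alpha\bN$ come from $\alpha$-invariant ideals of $\fB$ via the faithful conditional expectation $\Gamma$ onto $\fB$. Here the gauge action $\gamma_z$ and the faithfulness of $\Gamma$ (established in the preliminaries on crossed products by endomorphisms) do the work: any nonzero ideal $\fI$ of $\fB\times_\alpha\bN$ satisfies $\Gamma(\fI)\subset \fI\cap\fB$ and $\fI\cap\fB\ne 0$, and $\fI\cap\fB$ is readily seen to be $\alpha$-invariant, so a proper ideal of the crossed product forces a proper $\alpha$-invariant ideal of $\fB$; conversely any proper $\alpha$-invariant ideal $\fJ\lhd\fB$ generates a proper ideal of $\fB\times_\alpha\bN$ because $\Gamma$ maps that generated ideal back into $\fJ$. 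Assembling this self-contained argument (rather than merely citing \cite{Pa}) is the step requiring the most care, but it is entirely parallel to Cuntz's original analysis of $\mathcal O_n$ as a crossed product of a UHF algebra, and the needed invariance computations are exactly those already carried out in Lemma~\ref{L:alpha invariant} and Corollary~\ref{C:alpha bi-invariant}.
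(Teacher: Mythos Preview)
Your overall architecture matches the paper's: dispose of the non-surjective case via the full-corner identification, and in the surjective compact case invoke Theorem~\ref{T:endo} together with Corollary~\ref{C:alpha invariant} and Paschke's simplicity criterion. The paper does exactly this.

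There is, however, a genuine gap in your treatment of the ``minimal $\Rightarrow$ simple'' direction. You propose two routes: (a) cite Paschke, or (b) give a self-contained argument via the expectation $\Gamma$. For (a), you have not identified the actual hypothesis Paschke requires: his theorem assumes $\fB$ is \emph{strongly amenable}. The paper verifies this by observing that $\fB$ is a unital inductive limit (with unital connecting maps, since $\tilde X$ is compact) of the homogeneous algebras $\fM_{n^k}(\rC(\tilde X))$ and invoking Johnson \cite{John}. Your list of hypotheses (``$V$ proper isometry, $\alpha$ injective, $\bigcap_k\alpha^k(\fB)$ behaves well'') is not what Paschke asks for and is not sufficient.

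For (b), the step ``any nonzero ideal $\fI$ of $\fB\times_\alpha\bN$ satisfies $\Gamma(\fI)\subset\fI\cap\fB$'' is false as written. The expectation $\Gamma$ is an integral of the gauge automorphisms $\gamma_z$, and $\gamma_z(\fI)\subset\fI$ holds only when $\fI$ is already gauge-invariant; for an arbitrary closed ideal there is no reason for $\Gamma(\fI)$ to land in $\fI$. Showing that every ideal is gauge-invariant (equivalently, that every nonzero ideal meets $\fB$ nontrivially) is precisely the nontrivial content of Paschke's theorem, and it is where the amenability hypothesis is used. The analogy with Cuntz's analysis of $\O_n$ is misleading here: in that case the fixed-point algebra is a simple UHF algebra, and Cuntz exploits additional structure (a specific conditional expectation constructed from the generators, not just the gauge average) to force nontrivial intersection. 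Your sketch does not reproduce that mechanism. So either verify strong amenability and cite \cite{Pa} as the paper does, or replace the incorrect $\Gamma(\fI)\subset\fI$ step with a genuine argument that arbitrary ideals meet $\fB$.
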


\begin{proof}
First suppose that $X$ is surjective and is not minimal.
By Proposition~\ref{P:minimal lift}, this is equivalent to the existence
of a proper closed bi-invariant subset $F$ of $\tilde X$.
Corollary~\ref{C:alpha invariant} provides
an $\alpha$-bi-invariant ideal of $\fB$ determined by $F$.
Define
\[ \fI = \spn\{ \ft_u f \ft_v^* : f \in I_F \AND u,v \in \Fn \}. \]
Since $F$ is $\tilde\sigma$-invariant, this is seen to be an ideal of $\cenv(\A(X,\sigma))$.
We use the $\tau$-invariance of $F$ to see that $\fI \cap \rC_0(\tilde X) = I_F$.
This follows from Lemma~\ref{L:calculation} since this intersection will contain
$\ft_w f \ft_w^* = \upchi_w(f \circ\tau^{|w|})$.
It follows that $\cenv(\A(X,\sigma)$ is not simple.

When $(X,\sigma)$ is not surjective, it is definitely not minimal.
Moreover, it is clear that the surjective system $(X^T,\sigma^T)$ is not minimal either.
Thus $\cenv(\A(X^T,\sigma^T))$ is not simple.
Now $\cenv(\A(X,\sigma))$ is a full corner of  $\cenv(\A(X^T,\sigma^T))$,
so they are Morita equivalent.  In particular, there is a bijective correspondence
between their ideals.  So  $\cenv(\A(X,\sigma))$ is not simple either.

Now suppose that $X$ is minimal.
Then in particular $(X,\sigma)$ is surjective.
Lemma~\ref{L:alpha invariant} shows that $\fB$ has no proper $\alpha$-invariant ideals.
We will apply a result of Paschke \cite{Pa} to see that
$\cenv(\A(X,\sigma)) \simeq \fB \times_\alpha \bN$ is simple.
The C*-algebra $\fB$ is the inductive limit of matrix algebras over $\rC(\tilde X)$.
Since $X$ is compact, these algebras are unital and the imbeddings are unital.
It follows from Johnson \cite{John} that $\fB$ is strongly amenable.
Since $\fB$ has no proper $\alpha$-invariant ideals, Paschke's result shows
that $\fB \times_\alpha \bN$ is simple.
\end{proof}

%%%%%%%%%%%%%%%%%%%%%%%%%%%
\begin{remark}\label{R}
When $n=1$, the previous theorem is false. 
For example, when $X$ is just one point and $\sigma$ is the identity mapping, 
then $\A(X,\sigma)$ is the disc algebra. 
This system is obviously minimal, but 
$\cenv(\A(X,\sigma))=\rC(\bT)$ is not simple. 
Since $\tilde X = X$ is a point, this follows, for example, 
from Peters \cite{Pet} identification
\[ \cenv(\mathcal{A}(X,\sigma))  = \rC(\tilde X) \times_\alpha \bZ = \rC(\bT) .\]
It is well-known that one must require $\tilde X$ to be infinite
to apply the previous simplicity criteria. 
But when $n \geq 2$, $\tilde X$ is necessarily infinite.
\end{remark} 

The results of Schweizer \cite{Sch} show that unital Cuntz--Pimsner algebras
are simple if and only if the analogue of the C*-subalgebra $\fB$ has no invariant ideals.
This result could be applied here, but is technically more difficult.

Naturally one wants a nice condition that is equivalent to simplicity in the
non-compact case as well.  We suspect that this should hold precisely when $\tilde X$
has no proper bi-invariant closed subsets.  Example~\ref{E:noncompact} shows that
this is not equivalent to proper invariant sets in $X$.  Example~\ref{E:no bi-invariant set}
shows that this is not equivalent to proper bi-invariant sets in $X$ even when $X$ is compact.
So we have no good idea about a dynamical condition on $X$ which determines this property.

%%%%%%%%%%%%%%%%%%%%%%%%%%%%%%%%%%%%%%%%%%%

%%%%%%%%%%%%%%%%%%%%%%%%%%%%%%%%%%%%%%%%%%%

\end{document}